\numberwithin{equation}{section}
\begin{document}

\title*{From Quantum Systems to $L$-Functions: Pair Correlation Statistics and Beyond}
\author{Owen Barrett, Frank W. K. Firk, Steven J. Miller, and Caroline Turnage-Butterbaugh}
\institute{Owen Barrett, \at Yale University, New Haven, CT 06520, \email{owen.barrett@yale.edu}
\and Frank W. K. Firk, \at Yale University, New Haven, CT 06520 \email{fwkfirk@aol.com}
\and Steven J. Miller, \at Williams College, Williamstown, MA 01267 \email{Steven.Miller.MC.96@aya.yale.edu, sjm1@williams.edu}
\and Caroline Turnage-Butterbaugh, Duke University, Durham, NC 27708 \email{cturnagebutterbaugh@gmail.com}}
%
%

\maketitle


\abstract{The discovery of connections between the distribution of energy levels of heavy nuclei and spacings between prime numbers has been one of the most surprising and fruitful observations in the twentieth century. The connection between the two areas was first observed through Montgomery's work on the pair correlation of zeros of the Riemann zeta function. As its generalizations and consequences have motivated much of the following work, and to this day remains one of the most important outstanding conjectures in the field, it occupies a central role in our discussion below. We describe some of the many techniques and results from the past sixty years, especially the important roles played by numerical and experimental investigations, that led to the discovery of the connections and progress towards understanding the behaviors. In our survey of these two areas, we describe the common mathematics that explains the remarkable universality. We conclude with some thoughts on what might lie ahead in the pair correlation of zeros of the zeta function, and other similar quantities.}

\tableofcontents


\section{Introduction}\label{sec:introduction}
\numberwithin{equation}{section}




Montgomery's pair correlation conjecture posits that zeros of $L$-functions behave similarly to energy levels of heavy nuclei. The bridge between these fields is random matrix theory, a beautiful subject which has successfully modeled a large variety of diverse phenomena (see \cite{BBDS, KrSe} for a great example of how varied the systems can be). It is impossible in a short chapter to cover all the topics and connections; fortunately there is no need as there is an extensive literature. Our goal is therefore to briefly describe the history of the subject and the correspondences, concentrating on some of the main objects of interest and past successes, ending with a brief tour through a \emph{subset} of current work and a discussion of some of the open questions in mathematics. We are deliberately brief in areas that are well known or are extensively covered in the literature, and instead dwell at greater lengths on the inspiration from and interpretation through physics (see for example \S\ref{sec:lessonsnuclearphys}), as these parts of the story are not as well known but deserve to be (both for historical reasons as well as the guidance they can offer).

To this end, we begin with a short introduction to random matrix theory and a quick description of the main characters studied in this chapter. We then continue in \S\ref{sec:birthrmt} with a detailed exposition of the historical development of random matrix theory in nuclear physics in the 1950s and 1960s. We note the pivotal role played by the nuclear physics experimentalists in gathering data to support the theoretical conjectures; we will see analogues of these when we get to the work in the 1970s and 1980s on zeros of $L$-functions in \S\ref{sec:earlyyearsNTandRMT}. One of our main purposes is in fact to highlight the power of experimental data, be it data from a lab or a computer calculation, and show how attempts to explain such results influence the development and direction of subjects. We then shift emphasis to number theory in \S\ref{sec:classtopaircorr}, and see how studies on the class number problem led Montgomery to his famous pair correlation conjecture for the zeros of the Riemann zeta function. This and related statistics are the focus of the rest of the chapter; we describe what they are, what progress has been made (theoretically and numerically), and then turn to some open questions. Most of these open questions involve how the arithmetic of $L$-functions influences the behavior; remarkably the main terms in a variety of problems are independent of the finer properties of $L$-functions, and it is only in lower order terms (or, equivalently, in the rates of convergence to the random matrix theory behavior) that the dependencies on these properties surface. We then conclude in \S\ref{sec:futuretrendsquestions} with current questions and some future trends.

\bigskip\noindent
\textbf{Acknowledgements.} The third named author was partially supported by NSF grant DMS1265673. We thank our colleagues and collaborators over the years for many helpful discussions on these and related topics. One of us (Miller) was fortunate to be a graduate student at Princeton, and had numerous opportunities then to converse with John Nash on a variety of mathematical topics. It was always a joy sitting next to him at seminars. We are grateful for his kind invitation to contribute to this work, and his comments on an earlier draft. We would also like to thank Thomas Preu for a careful reading in 2023, which led to several typos being found and corrected.

\subsection{The Early Days: Statistics and Biometrics}\label{sec:earlydayswishart}

Though our main characters will be energy levels of nuclei and zeros of $L$-functions, the story of random matrix theory begins neither with physics nor with mathematics, but with statistics and biometrics. In 1928 John Wishart published an article titled \emph{The Generalised Product Moment Distribution in Samples from a Normal Multivariate} \cite{Wis} in Biometrika (see \cite{Wik} for a history of the journal, which we briefly recap). The journal was founded at the start of the century by Francis Galton, Karl Pearson, and Walter Weldon for the study of statistics related to biometrics. In the editors' introduction in the first issue (see also \cite{Wik}), they write: \begin{quote} It is intended that Biometrika shall serve as a means not only of collecting or publishing under one title biological data of a kind not systematically collected or published elsewhere in any other periodical, but also of spreading a knowledge of such statistical theory as may be requisite for their scientific treatment. \end{quote}

The question of interest for Wishart was that of estimating covariance matrices. The paper begins with a review of work to date on samples from univariate and bivariate populations, and issues with the determination of correlation and regression coefficients. After summarizing some of the work and formulas from Fisher, Wishart writes: \begin{quote} The distribution of the correlation coefficient was deduced by direct integration from this result. Further, K. Pearson and V. Romanovsky, starting from this fundamental formula, were able to deal with the regression coefficients. Pearson, in 1925, gave the mean value and standard deviation of the regression coefficient, while Romanovsky and Pearson, in the following year, published the actual distribution. \end{quote} After talking about the new problems that arise when dealing with three or more variates, he continues: \begin{quote} What is now asserted is that all such problems depend, in the first instance, on the determination of a fundamental frequency distribution, which will be a generalisation of equation (2). It will, in fact, be the simultaneous distribution in samples of the $n$ variances (squared standard deviations) and the $\frac{n(n-1)}{2}$ product moment coefficients. It is the purpose of the present paper to give this generalised distribution, and to calculate its moments up to the fourth order. The case of three variates will first be considered in detail, and thereafter a proof for the general $n$-fold system will be given. \end{quote}

In his honor the distribution of the sample covariance matrix (arising from a sample from a multivariate normal distribution) is called the Wishart distribution. More specifically, if we have an $n\times p$ matrix $X$ whose rows are independently drawn from a $p$-variate mean 0 normal distribution, the Wishart distribution is the density of the $p\times p$ matrices $X^T X$.

Several items are worth noting here. First, we have an ensemble (a collection) of matrices whose entries are drawn from a fixed distribution; in this case there are dependencies among the entries. Second, these matrices are used to model observable quantities of interest, in this case covariances. Finally, in his article he mentions an earlier work of his (published in the Memoirs of the Royal Meteorological Society, volume II, pages 29--37, 1928) which experimentally confirmed some of the results discussed, thus showing the connections between experiment and theory which play such a prominent role later in the story also played a key role in the founding.

It was not until almost thirty years later that random matrix theory, in the hands and mind of Wigner, bursts onto the physics scene, and then it will be almost another thirty years more before the connections with number theory emerge. Before describing these histories in detail, we end the introduction with a very quick tour of some of the quantities and objects we'll meet.

\subsection{Cast of Characters: Nuclei and $L$-functions}\label{sec:castofchars}

The two main objects we study are energy levels of heavy nuclei on the physics side, and zeros of the Riemann zeta function (or more generally $L$-functions) on the number theory side, especially Montgomery's pair correlation conjecture and related statistics. We give a full statement of the pair correlation conjecture, and results towards its proof, in \S\ref{sec:paircorrzeroszeta}. Briefly, given an ordered sequence of events (such as zeros on the critical line, eigenvalues of Hermitian matrices, energy levels of heavy nuclei) one can look at how often a difference is observed. The remarkable conjecture is that these very different systems exhibit similar behavior.

We begin with a review of some facts about these areas, from theories for their behavior to how experimental observations were obtained which shed light on the structures, and then finish the introduction with some hints at the similarities between these two very different systems. Parts of that section, as well as much of \S\ref{sec:birthrmt}, are expanded with permission from the survey article \cite{FM} written by two of the authors of this chapter for the inaugural issue of the open access journal Symmetry. The goal of that article was similar to this chapter, though there the main quantity discussed was Wigner's semi-circle law and not pair correlation.

Many, if not all, of the other survey articles in the subject concentrate on the mathematics and ignore the experimental physics. When writing the survey \cite{FM} the authors deliberately sought a balance, with the intention of sharing and elaborating on that vantage again in a later work to give a wider audience a more complete description of the development of the subjects, as other approaches are already available in the literature. We especially recommend to the reader Goldston's excellent survey article \emph{Notes on pair correlation of zeros and prime numbers} (see \cite{Go}) for an extended, detailed technical discussion; the purpose of this chapter is to complement this and other surveys by highlighting other aspects of the story, especially how Montgomery's work on the pair correlation of zeros of $\zeta(s)$ connects, through random matrix theory, a central object of study in number theory to our understanding of the physics of heavy nuclei.

\subsubsection{Atomic Theory and Nuclei}\label{sec:atomictheoryandnuclei}


Experiments and experimental data played a crucial role in our evolving understanding of the atom. For example, Ernest Rutherford's gold foil experiment (performed by Hans Geiger and Ernest Marsden) near the start of the twentieth century demonstrated that J. J. Thomson's plum pudding model of an atom with negatively charged electrons embedded in a positively charged region was false, and that the atom had a very small positively charged nucleus with the electrons far away. These experiments involved shooting alpha particles at thin gold foils. Alpha particles are helium atoms without the electrons and are thus positively charged. While this positive charge was responsible for disproving the plum pudding model, such particles could not deeply probe the positively charged nucleus due to the strong repulsion from like charges. To make further progress into the structure of the atom in general, and the nucleus in particular,  another object was needed. A great candidate was the neutron (discovered by Chadwick in 1932); as it did not have a net charge, the electric force would play an immensely smaller role in its interaction with the nucleus than it did with the alpha particles.

The earliest studies of neutron induced reactions showed that the total neutron cross section\footnote{ A total neutron cross section is defined as $$\frac{\rm Number\ of\ events\ of\ all\ types\ per\ unit\ time\ per\ nucleus}{\rm Number\ of\ incident\ neutrons\ per\ unit\ time\ per\ unit\ area}, $$ and has the dimensions of area (the standard unit is the \emph{barn}, $10^{-24} {\rm cm}^2$).} for the interaction of low-energy (electron-volt, eV) neutrons with a nucleus is frequently much greater than the geometrical area presented by the target nucleus to the incident neutron \cite{FA}.  It was also found that the cross section varies rapidly as a function of the bombarding energy of the incident neutron.  The appearance of these well-defined \emph{resonances} in the neutron cross section is the most characteristic feature of low energy nuclear reactions. 

In general, the low energy resonances were found to be closely spaced (spacing $\le 10$ eV in heavy nuclei), and to be very narrow (widths $\le 0.1$ eV).  These facts led Niels Bohr to introduce the \emph{compound nucleus} model \cite{BO} that assumes the interaction between an incoming neutron and the target nucleus is so strong that the neutron rapidly shares its energy with many of the target nucleons.  The nuclear state that results from the combination of incident neutron and target nucleus may therefore last until sufficient energy again resides in one of the nucleons for it to escape from the system.  This is a statistical process, and a considerable time may elapse before it occurs.  The long lifetime of the state ($\tau$) (on a nuclear timescale) explains the narrow width ($\Gamma$) of the resonance.\footnote{The width, $\Gamma$, is related to the lifetime, $\tau$, by the uncertainty relation $\Gamma = h/2\pi \tau$, where $h$  is  Planck's constant.  The finite width (lack of energy definition) is due to the fact that a resonant state can decay by emitting a particle, or radiation, whereas a state of definite energy  must be a stationary state.}  Also, since many nucleons are involved in the formation of a compound state, the close spacing of the resonances is to be expected since there are clearly many ways of exciting many nucleons.  The qualitative model outlined above has formed the basis of most theoretical descriptions of low-energy, resonant nuclear reactions \cite{BW}.

If a resonant state can decay in a number of different ways (or channels), we can ascribe a probability per unit time for the decay into a channel, $c$, which can be expressed as a partial width $\Gamma_{\lambda c}$.  The total width is the sum of the partial widths, i.e., $\Gamma_\lambda = \sum_c \Gamma_{\lambda c}$.

The appearance of well-defined resonances occurs in heavy nuclei (mass number $A \ge 100$, say) for incident neutron energies up to about $100$ keV, and in light nuclei up to neutron energies of several MeV.  As the neutron bombarding energies are increased above these energies, the total cross sections are observed to become smoother functions of neutron energy \cite{HS}.  This is due to two effects: firstly, the level density (i.e., the number of resonances per unit energy interval) increases rapidly as the excitation energy of the compound nucleus is increased, and secondly, the widths of the individual resonances tend to increase with increasing excitation energy so that, eventually, they overlap.  The smoothed-out cross sections provide useful information on the average properties of resonances.  One of the most significant features of these cross sections is the appearance of gross fluctuations that have been interpreted in terms of the single-particle nature of the neutron-nucleus interaction \cite{LTW}.  These \emph{giant resonances} form one of the main sources of experimental evidence for introducing the successful \emph{optical model} of nuclear reactions.  This model represents the interaction between a neutron and a nucleus in terms of the neutron moving in a complex potential well \cite{OBJ} in which the imaginary part allows for the absorption of the incident neutron.

Experimental results show that, on increasing the bombarding energy above about $5$ MeV, a different reaction mechanism may occur.  For example, the energy spectra of emitted nucleons frequently contain too many high-energy nucleons compared with the predictions of the compound nucleus model.  The mechanism no longer appears to be one in which the incident neutron shares its energy with many target nucleons but is one in which the neutron interacts with a single nucleon or, at most, a few nucleons.  Such a mechanism is termed a \emph{direct interaction}, which is defined as a nuclear reaction in which only a few of the available degrees of freedom of the system are involved \cite{AU}.

The \emph{optical model}, mentioned above, is an important example of a direct interaction that takes place even at low bombarding energies.  The incident neutron is considered to move in the mean nuclear potential of all the nucleons in the target.  This model also has been used to account for anomalies in the spectra of gamma-rays resulting from thermal neutron capture \cite{L,LL}.

At even higher bombarding energies, greater than $50$ MeV, say, the mechanism becomes clearer in the sense that direct processes are the most important.  The reactions then give information on the fundamental nucleon-nucleon interaction; these studies and their interpretation are, however, outside the scope of the present discussion.

When a low-energy neutron (energy $< 10$ keV, say) interacts with a nucleus the excitation energy of the compound nucleus is greatly increased by the neutron binding energy that typically ranges from $5$ to $10$ MeV.  In the late 1950s, experimental methods were developed for measuring low-energy neutrons with resolutions of a few electron-volts.  This meant that, for the first time in any physical system, it became possible to study the fine structure of resonances at energies far above the ground state of the system.  The relevant experimental methods are discussed in \S\ref{sec:birthrmt}.  Important information was thereby obtained concerning the properties that characterizes the resonances such as their peak cross sections, elastic scattering widths, and adjacent spacing.  The results were  used to test the predictions of various nuclear models used to describe the interactions.  These models ranged from the Fermi Gas Model, a quantized version of classical Statistical Mechanics and Thermodynamics \cite{BE}, to the sophisticated Nuclear Shell Model \cite{BW}.  In the mid-1950s, all Statistical Mechanics Models predicted that the spacing distribution of nearest-neighbor resonances of the same spin and parity in a heavy nucleus (mass number $A \ge 100$, say) was an exponential distribution.  By 1956, the experimental evidence on the spacing distribution of s-wave resonances in a number of heavy nuclei indicated a lack of very closely-spaced resonances, contradicting the predictions of an exponential distribution \cite{HH}.  By 1960, two research groups \cite{RDRH,FLM} showed, unequivocally, that the spacing distribution of resonances up to an energy of almost $2$ keV followed the prediction of the random matrix model surmised by Wigner in 1956 \cite{Wig5}; in his model the probability of a zero spacing is zero!  It is a model rooted in statistics, which interestingly is where our story on random matrix theory began!

\subsubsection{$L$-functions and Their Zeros}\label{sec:Lfunc_and_zeros}

There are many excellent introductions, at a variety of levels, to number theory and $L$-functions. We assume the reader is familiar with the basics of the subject; for more details see among others \cite{Da,Ed,HW,IK,MT-B,Se}. The discussion below is a quick review and is an abridgement (and slight expansion) of \cite{FM}, which has additional details.

The primes are the building blocks of number theory: every integer can be written uniquely as a product of prime powers. Note that the role played by the primes mirrors that of atoms in building up molecules. One of the most important questions we can ask about primes is also one of the most basic: how many primes are there at most $x$? In other words, how many building blocks are there up to a given point?

Euclid proved over 2000 years ago that there are infinitely many primes; so, if we let $\pi(x)$ denote the number of primes at most $x$, we know $\lim_{x\to\infty} \pi(x) = \infty$. Though Euclid's proof is still used in courses around the world (and gives a growth rate on the order  of $\log\log x$), one can obtain much better counts on $\pi(x)$.

The prime number theorem states that the number of primes at most $x$ is ${\rm Li}(x) + o({\rm Li}(x))$, where ${\rm Li}(x) = \int_2^x dt/\log t$ and for $x$ large, ${\rm Li}(x)$ is approximately $x/\log x$, and $f(x) = o(g(x))$ means $\lim_{x\to\infty} f(x)/g(x) = 0$. While it is possible to prove the prime number theorem elementarily \cite{Erd,Sel2}, the most informative proofs use complex numbers and complex analysis, and lead to the fascinating connection between number theory and nuclear physics. One of the most fruitful approaches to understanding the primes is to understand properties of the Riemann zeta function, $\zeta(s)$, which is defined for ${\rm Re}(s) > 1$ by \begin{equation} \zeta(s) \ = \ \sum_{n=1}^\infty \frac1{n^s}; \end{equation} the series converges for ${\rm Re}(s) > 1$ by the integral test. By unique factorization, we may also write $\zeta(s)$ as a product over primes. To see this, use the geometric series formula to expand $(1-p^{-s})^{-1}$ as $\sum_{k=0}^\infty p^{-ks}$ and note that $n^{-s}$ occurs exactly once on each side (and clearly every term from expanding the product is of the form $n^{-s}$ for some $n$). This is called the Euler product of $\zeta(s)$, and is one of its most important properties: \begin{equation} \zeta(s) \ = \ \sum_{n=1}^\infty \frac1{n^s} \ = \ \prod_{p\ {\rm prime}} \left(1 - \frac1{p^s}\right)^{-1}. \end{equation} Initially defined only for ${\rm Re}(s) > 1$, using complex analysis the Riemann zeta function can be meromorphically continued to all of $\mathbb{C}$, having only a simple pole with residue 1 at $s=1$. It satisfies the functional equation \begin{equation}\label{eq:defixi} \xi(s) \ =\ \frac12 s(s-1) \Gamma\left(\frac{s}{2}\right)\pi^{-\frac{s}{2}} \zeta(s) \ = \ \xi(1-s).\end{equation} One proof is to use the Gamma function, $\Gamma(s) = \int_0^\infty e^{-t} t^{s-1}dt$.  A simple change of variables gives \begin{equation} \int_0^\infty x^{\frac12 s - 1} e^{-n^2 \pi x}dx \ = \ \Gamma\left(\frac{s}{2}\right) / n^s\pi^{s/2}.\end{equation} Summing over $n$ represents a multiple of $\zeta(s)$ as an integral. After some algebra we find \begin{equation} \Gamma\left(\frac{s}2\right) \zeta (s) \ = \ \int_1^\infty  x^{\frac12 s -1} \omega(x)dx + \int_1^\infty x^{-\frac12 s -1}\omega\left(\frac{1}{x}\right)dx,\end{equation} with $\omega(x)$ $=$ $\sum_{n=1}^{\infty} e^{-n^2 \pi x}$.  Using Poisson summation, we see \begin{equation} \omega\left(\frac{1}{x}\right)\ =\ -\frac12 + -\frac12 x^\frac12 + x^\frac12 \omega(x),\end{equation} which yields \begin{equation} \pi^{-\frac12 s} \Gamma\left(\frac{s}2\right) \zeta (s) \ = \ \frac{1}{s(s-1)} + \int_1^\infty  (x^{\frac12 s -1}+ x^{- \frac12 s - \frac12}) \omega(x)dx,\end{equation} from which the claimed functional equation follows.

The distribution of the primes is a difficult problem; however, the distribution of the positive integers is not and has been completely known for quite some time! The hope is that we can understand $\sum_n 1/n^s$ as this involves sums over the integers, and somehow pass this knowledge on to the primes through the Euler product.

Riemann \cite{Ri} (see \cite{Cl,Ed} for an English translation) observed a fascinating connection between the zeros of $\zeta(s)$ and the error term in the prime number theorem. As this relation is the starting point for our story on the number theory side, we describe the details in some length. One of the most natural things to do to a complex function is to take contour integrals of its logarithmic derivative; this yields information about zeros and poles, and  we will see later in \eqref{eq:genexpformula} that we can get even more information if we weigh the integral with a test function. There are two expressions for $\zeta(s)$; however, for the logarithmic derivative it is clear that we should use the Euler product over the sum expansion, as the logarithm of a product is the sum of the logarithms. Let  \begin{equation}\label{eq:defnlambda} \Lambda(n)\  = \ \begin{cases} \log p & {\rm if}\ n=p^r\ {\rm for\ some\ integer\ }r \\ 0 & {\rm otherwise.} \end{cases}\end{equation} We find \begin{equation}\label{eq:logderiv} \frac{\zeta'(s)}{\zeta(s)} \ = \ -\sum_p \frac{\log p \cdot p^{-s}}{1-p^{-s}} \ = \ -\sum_{n=1}^\infty \frac{\Lambda(n)}{n^s}  \end{equation} (this is proved by using the geometric series formula to write $(1-p^{-s})^{-1}$ as $\sum_{k=0}^\infty 1/p^s$, collecting terms and then using the definition of $\Lambda(n)$). Moving the negative sign over and multiplying by $x^s/s$, we find \begin{equation} \frac1{2\pi i} \int_{(c)} -\frac{\zeta'(s)}{\zeta(s)} \frac{x^s}{s}\ ds \ = \ \frac1{2\pi i} \int_{(c)} \sum_{n \le x} \Lambda(n)\left(\frac{x}{n}\right)^s \frac{ds}{s}, \end{equation} where we are integrating over some line ${\rm Re}(s) = c > 1$. The integral on the right hand side is $1$ if $n < x$ and $0$ if $n > x$ (by choosing $x$ non-integral, we do not need to worry about $x=n$), and thus gives $\sum_{n \le x} \Lambda(n)$. By shifting contours and keeping track of the poles and zeros of $\zeta(s)$, the residue theorem implies that the left hand side is \begin{equation} x - \sum_{\rho: \zeta(\rho) = 0} \frac{x^\rho}{\rho}; \end{equation} the $x$ term comes from the pole of $\zeta(s)$ at $s=1$ (remember we count poles with a minus sign), while the $x^\rho/\rho$ term arises from zeros; in both cases we must multiply by the residue, which is $x^\rho/\rho$ (it can be shown that $\zeta(s)$ has neither a zero nor a pole at $s=0$). Some care is required with this sum, as $\sum 1/|\rho|$ diverges. The solution involves pairing the contribution from $\rho$ with $\overline{\rho}$; see for example \cite{Da}.

The Riemann zeta function vanishes whenever $\rho$ is a negative even integer; we call these the \emph{trivial} zeros. These terms contribute $\sum_{k=-1}^\infty x^{-2k}/(2k) = -\frac12 \log(1-x^{-2})$. This leads to the following beautiful formula, known as the \emph{explicit formula}: \begin{equation}\label{eq:keybeautyformriemannprimes}  x - \sum_{\rho: {\rm Re}(\rho) \in (0,1)\atop \zeta(\rho)=0} \frac{x^\rho}{\rho} - \frac12\log(1-x^{-2}) \ = \ \sum_{n \le x} \Lambda(n)\end{equation} If we write $n$ as $p^r$, the contribution from all $p^r$ pieces with $r \ge 2$ is bounded by $2x^{1/2}\log x$ for $x$ large, thus we really have a formula for the sum of the primes at most $x$, with the prime $p$ weighted by $\log p$. Through partial summation, knowing the weighted sum is equivalent to knowing the unweighted sum.

We can now see the connection between the zeros of the Riemann zeta function and counting primes at most $x$. The contribution from the trivial zeros is well-understood, and is just $-\frac12\log(1-x^{-2})$. The remaining zeros, whose real parts are in $[0,1]$, are called the \emph{non-trivial} or \emph{critical} zeros. They are far more important and more mysterious. The smaller the real part of these zeros of $\zeta(s)$, the smaller the error. Due to the functional equation, however, if $\zeta(\rho) = 0$ for a critical zero $\rho$ then $\zeta(1-\rho) = 0$ as well. Thus the `smallest' the real part can be is 1/2. This is the celebrated \emph{Riemann Hypothesis (RH)}, which is probably the most important mathematical aside ever in a paper. Riemann \cite{Cl,Ed,Ri} wrote (translated into English; note when he talks about the roots being real, he's writing the roots as $1/2 + i\gamma$, and thus $\gamma \in \mathbb{R}$ is the Riemann Hypothesis):
\begin{quote} One now finds indeed approximately this number of real roots within these limits, and it is very probable that all roots are real. Certainly one would wish for a stricter proof here; I have meanwhile temporarily put aside the search for this after some fleeting futile attempts, as it appears unnecessary for the next objective of my investigation.
\end{quote}
Though not mentioned in the paper, Riemann had developed a terrific formula for computing the zeros of $\zeta(s)$, and had checked (but never reported!) that the first few were on the critical line ${\rm Re}(s) = 1/2$. His numerical computations were only discovered decades later when Siegel was looking through Riemann's papers.

RH has a plethora of applications throughout number theory and mathematics; counting primes is but one of many. The prime number theorem is in fact equivalent to the statement that ${\rm Re}(\rho) < 1$ for any zero of $\zeta(s)$, and was first proved independently by Hadamard \cite{Had} and de la Vall\'{e}e Poussin \cite{dlVP} in 1896. Each proof crucially used results from complex analysis, which is hardly surprising given that Riemann had shown $\pi(x)$ is related to the zeros of the meromorphic function $\zeta(s)$. It was not until almost 50 years later that Erd\"{o}s \cite{Erd} and Selberg \cite{Sel2} obtained elementary proofs of the prime number theorem (in other words, proofs that did not use complex analysis, which was quite surprising as the prime number theorem was known to be equivalent to a statement about zeros of a meromorphic function). See \cite{Gol4} for some commentary on the history of elementary proofs. It is clear, however, that the distribution of the zeros of the Riemann zeta function will be of primary (in both senses of the word!) importance.

The Riemann zeta function is the first of many similar functions that we can study. We assume the reader has seen $L$-functions before; in addition to the surveys mentioned earlier, see also the introductory remarks in \cite{ILS, RS}. We can examine, for the real part of $s$ sufficiently large, \begin{equation} L(s,f) \ := \ \sum_{n=1}^\infty \frac{a_f(n)}{n^s};\end{equation} of course, while we can create such a function for any sequence $\{a_f(n)\}$ of sufficient decay, only certain choices will lead to useful objects whose zeros encode the solution to questions of arithmetic interest. For example, if we chose $a_f$ arising from Dirichlet characters we obtain information about primes in arithmetic progression, while taking $a_f(p)$ to count the number of solutions to an elliptic curve $y^2 = x^3 + Ax + B$ modulo $p$ yields information about the rank of the group of rational solutions.

Our previous analysis, where many of our formulas are due to taking the logarithmic derivative and computing a contour integral, suggests that we insist that an Euler product hold: \begin{equation} L(s,f) \ = \ \sum_{n=1}^\infty \frac{a_f(n)}{n^s} \  =  \ \prod_{p\ {\rm prime}} L_p(s,f). \end{equation} Further, we want a functional equation relating the values of the completed $L$-function at $s$ and $1-s$, which allows us to take the series expansion that originally converges only for real part of $s$ large and obtain a function defined everywhere:
\begin{eqnarray}
\Lambda(s,f) \ = \ L_\infty(s,f) L(s,f) \ = \  \epsilon_f \Lambda(1-s,f),
\end{eqnarray} where $\epsilon_f$, the sign of the functional equation, is of absolute value 1, and
\begin{eqnarray} L_p(s,f) & \ = \ & \prod_{j=1}^d \left(1 - \alpha_{f;j}(p) p^{-s}\right)^{-1} \nonumber\\ L_\infty(s,f) & = & A Q^s \prod_{j=1}^n \Gamma\left(\frac{s}{2} + \alpha_{f;j}\right), \end{eqnarray} with $A\neq 0$ a complex number, $Q > 0$, $\alpha_{f;j} \ge 0$ and $\sum_{j=1}^n \alpha_{f;j}(p)^\nu = a_f(p^\nu)$. For `nice' $L$-functions, it is believed that the Generalized Riemann Hypothesis (GRH) holds: All non-trivial zeros real part equal to 1/2.

We end our introduction to our main number theoretic objects of interest by noting that \eqref{eq:keybeautyformriemannprimes} is capable of massive generalization, not just to other $L$-functions but we can multiply \eqref{eq:logderiv} by a nice test function $\phi(s)$ instead of the specific function $x^s/s$. The result of this choice is to have a formula that relates sums of $\phi$ at zeros of our $L$-function to sums of the Fourier transform of $\phi$ at the primes. For example (see Section 4 of \cite{ILS}) one can show \begin{equation}\label{eq:genexpformula} \sum_{\rho} \phi\left(\frac{\gamma}{2\pi}{\log R}\right) \ = \ \frac{A}{\log R} - 2 \sum_p \sum_{\nu=1}^\infty a_f(p^\nu) \widehat{\phi}\left(\frac{\log p^\nu}{\log R}\right)  \frac{\log p}{p^{\nu/2} \log R}, \end{equation} where $R$ is a free scaling parameter chosen for the problem of interest, $A = 2\widehat{\phi}(0) \log Q + \sum_{j=1}^n A_j$ with \begin{equation} A_j  \ = \ \int_{-\infty}^\infty \psi\left(\alpha_{f;j} + \frac14 + \frac{2\pi i x}{\log R}\right) \phi(x) dx \end{equation} and the Fourier transform is defined by \begin{equation} \widehat{\phi}(y) \ := \ \int_{-\infty}^\infty \phi(x) e^{-2\pi i x y} dx. \end{equation}

\subsubsection{From the Hilbert-P\'olya Connection to Random Matrix Theory}

As stated earlier, the Generalized Riemann Hypothesis asserts that the non-trivial zeros of the an $L$-function are of the form $\rho = 1/2+i\gamma_\rho$ with $\gamma_\rho$ real. Thus it makes sense to talk about the distribution between adjacent zeros. Around 1913, P\'{o}lya conjectured that the $\gamma_\rho$ are the eigenvalues of a naturally occurring, unbounded, self-adjoint operator, and are therefore real.\footnote{If $v$ is an eigenvector with eigenvalue $\lambda$ of a Hermitian matrix $A$ (so $A = A^\ast$ with $A^\ast$ the complex conjugate transpose of $A$, then $v^\ast (Av) = v^\ast (A^\ast v) = (Av)^\ast v$; the first expression is $\lambda ||v||^2$ while the last is $\overline{\lambda} ||v||^2$, with $||v||^2 = v^\ast v = \sum |v_i|^2$ non-zero. Thus $\lambda = \overline{\lambda}$, and the eigenvalues are real. This is one of the most important properties of Hermitian matrices, as it allows us to order the eigenvalues.}\label{footnote:hermevaluesreal} Later, Hilbert contributed to the conjecture, and reportedly introduced the phrase `spectrum' to describe the eigenvalues of an equivalent Hermitian operator, apparently by analogy with the optical spectra observed in atoms.  This remarkable analogy pre-dated Heisenberg's Matrix Mechanics and the Hamiltonian formulation of Quantum Mechanics by more than a decade.

Not surprisingly, the Hilbert-P\'{o}lya conjecture was considered so intractable that it was not pursued for decades, and random matrix theory remained in a dormant state.  To quote Diaconis \cite{Di1}: \begin{quote} Historically, random matrix theory was started by statisticians \cite{Wis} studying the correlations between different features of population (height, weight, income...).  This led to correlation matrices with $(i, j)$ entry the correlation between the $i$th  and $j$th features.  If the data were based on a random sample from a larger population, these correlation matrices are random; the study of how the eigenvalues of such samples fluctuate was one of the first great accomplishments of random matrix theory. \end{quote}

Diaconis \cite{Di2} has given an extensive review of random matrix theory from the perspective of a statistician.  A strong argument can be made, however, that random matrix theory, as we know it today in the physical sciences, began in a formal mathematical sense with the Wigner surmise \cite{Wig5} concerning the spacing distribution of adjacent resonances (of the same spin and parity) in the interactions between low-energy neutrons and nuclei, which we describe in great detail in \S\ref{sec:birthrmt}.

\section{The `Birth' of Random Matrix Theory in Nuclear Physics}\label{sec:birthrmt}
\numberwithin{equation}{section}

Below we discuss some of the history of investigations of the nucleus, concentrating on the parts that led to the introduction of random matrix theory to the subject. As mentioned earlier, this section is expanded with permission from \cite{FM}. Our goal is to provide the reader with both sides of the coin, highlighting the interplay between theory and experiment, and building the basis for applications to understanding zeros of $L$-functions; we have chosen to spend a good amount of space on these experiments and conjectures as these are less-well known to the general mathematician than the later parts of our story.

While other methods have since been developed, random matrix theory was the first to make truly accurate, testable predictions. The general idea is that the behavior of zeros of $L$-functions are well-modeled by the behavior of eigenvalues of certain matrices. This idea had previously been successfully used to model the distribution of energy levels of heavy nuclei (some of the fundamental papers and books on the subject, ranging from experiments to theory, include \cite{BFFMPW, DLL, Dy1, Dy2, FLM, FRG, For, FKPT, Gau, HH, HPB, Hu, Meh1, Meh2, MG, MT-B, Po, T, Wig1, Wig2, Wig3, Wig4, Wig5, Wig6}). We describe the development of random matrix theory in nuclear physics below, and then delve into more of the details of the connection between the two subjects.

\subsection{Neutron Physics}

The period from the mid-1930s to the late 1970s was the golden age of neutron physics; widespread interest in understanding the physics of the nucleus, coupled with the need for accurate data in the design of nuclear reactors, made the field of neutron physics of global importance in fundamental physics, technology, economics, and politics. In \S\ref{sec:atomictheoryandnuclei} we introduced some of the early models for nuclei, and discussed some of the original experiments. In this section we describe later work where better resolution was possible. Later we will show how a similar perspective and chain of progress holds in studies of zeros of the Riemann zeta function! Thus the material here, in addition to being of interest in its own right, will also provide a valuable vantage for study of arithmetic objects.

In the mid-1950s, a discovery was made that turned out to have far-reaching consequences beyond anything that those working in the field could have imagined.  For the first time, it was possible to study the microstructure of the continuum in a strongly-coupled, many-body system, at very high excitation energies. This unique situation came about as the result of the following facts.

\begin{itemize}

\item	Neutrons, with kinetic energies of a few electron-volts, excite states in
compound nuclei at energies ranging from about 5 million electron-volts to almost 10 million electron-volts -- typical neutron binding energies. Schematically, see Figure \ref{fig:groundstatesneutrons}.

\begin{figure}
\begin{center}
\scalebox{.7}{\includegraphics{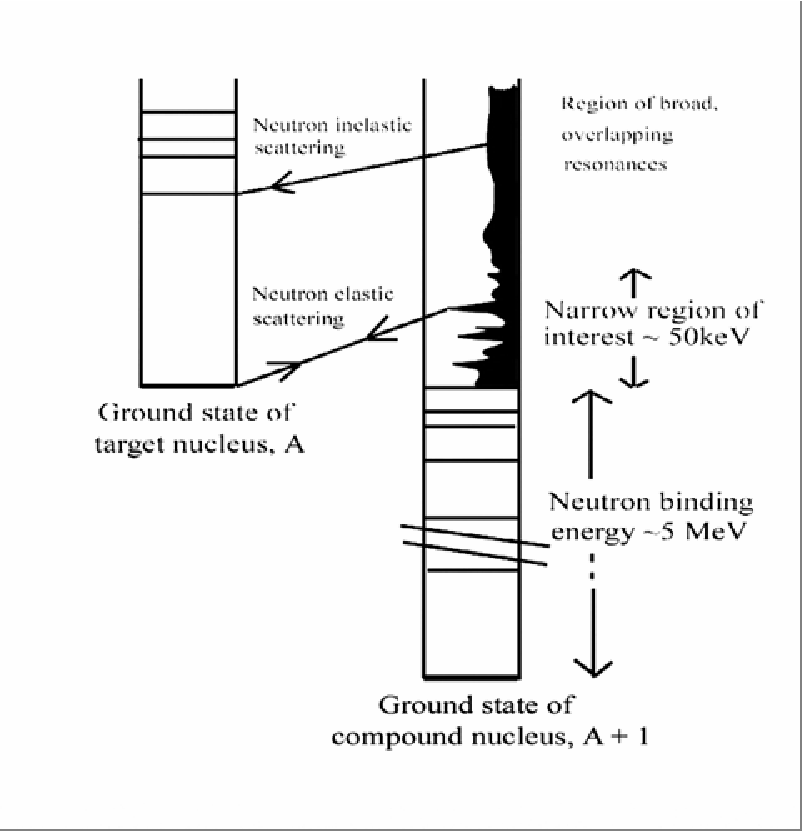}}
\caption{\label{fig:groundstatesneutrons} An energy-level diagram showing the location of  highly-excited resonances in the compound nucleus formed by the interaction of a neutron, $n$, with a nucleus of mass number $A$.  Nature provides us with a narrow energy region in which the resonances are clearly separated, and are observable.}
\end{center}
\end{figure}

\item	Low-energy resonant states in heavy nuclei (mass numbers greater than about $100$) have lifetimes in the range $10^{-14}$ to $10^{-15}$ seconds, and therefore they have widths of about $1$ eV.  The compound nucleus loses all memory of the way in which it is formed.  It takes a relatively long time for sufficient energy to reside in a neutron before being emitted.  This is a highly complex, statistical process.  In heavy nuclei, the average spacing of adjacent resonances is typically in the range from a few eV to several hundred eV.

\item	Just above the neutron binding energy, the angular momentum barrier restricts the possible range of values of total spin of a resonance, \textbf{J} (\textbf{J} = \textbf{I} + \textbf{i} + \textbf{l}, where \textbf{I} is the spin of the target nucleus, \textbf{i} is the neutron spin, and \textbf{l} is the relative orbital angular momentum).  This is an important technical point.	

\item	The neutron time-of-flight method provides excellent energy resolution at energies up to several keV.  (See Firk \cite{Fi} for a review of time-of-flight spectrometers.)

\end{itemize}

The speed $v_n$ of a neutron can be determined by measuring the time $t_n$ that it takes to travel a measured distance $\ell$ in free space.  Using the standard result of special relativity, the kinetic energy of the neutron can be deduced using the equation \begin{eqnarray} E_n & \ = \ & E_0[(1 - v_n^2/c^2)^{-1/2} - 1] \nonumber\\ &=&  E_0[(1 - \ell^2/t_n^2c^2)^{-1/2} - 1], \end{eqnarray} where $E_0 \approx 939.553$ MeV is the rest energy of the neutron and $c \approx 2.997925 \cdot 10^8$ m/s is the speed of light.

If the units of energy are MeV, and those of length and time are meters and nanoseconds, then \begin{equation} 	E_n\  =\ 939.553[(1 - 11.126496 \ell^2/t_n^2)^{-1/2} - 1] {\rm \ MeV}. \end{equation} It is frequently useful to rearrange this equation to give the ratio $t_n/\ell$ for a given energy, $E_n$:  \begin{equation}\label{eq:firkthreepointthree}           t_n/\ell\  =\  3.3356404/ \sqrt{1 - (939.553/(E_n + 939.553))^2}. \end{equation}    Typical values for this ratio are 72.355 ns/m for $E_n = 1$ MeV and 23.044 ns/m for $E_n =  10$ MeV.

At energies below 1 MeV, the non-relativistic approximation to \eqref{eq:firkthreepointthree} is  adequate:   \begin{equation}       (t_n/\ell)_{\rm NR}\ =\ \sqrt{E_0/2 E_nc^2}\ =\ 72.298/\sqrt{E_n}\   \mu{\rm s/m}.	\end{equation} In the eV-region, it is usual to use units of $\mu$s/m: a $1$ eV neutron travels $1$ meter in $72.3$ microseconds.  At non-relativistic energies, the energy resolution $\Delta E$ at an energy $E$ is simply: \begin{equation} \Delta E\ \approx\ 2E \Delta t/t_E, \end{equation} where $\Delta t$ is the \emph{total} timing uncertainty, and $t_E$ is the flight time for a neutron of energy $E$.

In 1958, the two highest-resolution neutron spectrometers in the world had total timing uncertainties  $\Delta t \approx 200$ nanoseconds.  For a flight-path length of 50 meters the resolution was $\Delta E \approx 3$ eV at 1 keV.

In ${\ }^{238}{\rm U} + {\rm n}$, the excitation energy is about 5 MeV; the effective resolution for a 1 keV-neutron was therefore \begin{equation} \Delta E/E_{{\rm effective}} \ \approx \  6 \cdot 10^{-7} \end{equation} (at 1 eV, the effective resolution was about $10^{-11}$).

Two basic broadening effects limit the sensitivity of the method.

\begin{enumerate}

\item Doppler broadening of the resonance profile due to the thermal motion of the target nuclei; it is characterized by the quantity $\delta \approx 0.3\sqrt{E/A}$  (eV), where $A$ is the mass number of the target.  If $E = 1$ keV and $A = 200$, $\delta \approx 0.7$ eV, a value that may be ten times greater than the natural width of the resonance.

\item  Resolution broadening of the observed profile due to the finite resolving power of the spectrometer.  For a review of the experimental methods used to measure neutron total cross sections see Firk and Melkonian \cite{FMe}.  Lynn \cite{Ly} has given a detailed account of the theory of neutron resonance reactions.
\end{enumerate}

In the early 1950s, the field of low-energy neutron resonance spectroscopy was dominated by research groups working at nuclear reactors.  They were located at National Laboratories in the United States, the United Kingdom, Canada, and the former USSR.  The energy spectrum of fission neutrons produced in a reactor is moderated in a hydrogenous material to generate an enhanced flux of low-energy neutrons.  To carry out neutron time-of-flight spectroscopy, the continuous flux from the reactor is ``chopped'' using a massive steel rotor with fine slits through it.  At the maximum attainable speed of rotation (about $20,000$ rpm), and with slits a few thousandths-of-an-inch in width, it is possible to produce pulses each with a duration approximately 1 $\mu$sec.  The chopped beams have rather low fluxes, and therefore the flight paths are limited in length to less than 50 meters.  The resolution at $1$ keV is then $\Delta E \approx 20$ eV, clearly not adequate for the study of resonance spacings about 10 eV.

In 1952, there were only four accelerator-based, low-energy neutron spectrometers operating in the world.  They were at Columbia University in New York City, Brookhaven National Laboratory, the Atomic Energy Research Establishment, Harwell, England, and at Yale University. The performances of these early accelerator-based spectrometers were comparable with those achieved at the reactor-based facilities.  It was clear that the basic limitations of the neutron-chopper spectrometers had been reached, and therefore future developments in the field would require improvements in accelerator-based systems.

In 1956, a new high-powered injector for the electron gun of the Harwell electron linear accelerator was installed to provide electron pulses with very short durations (typically less than $200$ nanoseconds) \cite{FRG}.  The pulsed neutron flux (generated by the ($\gamma$, n) reaction) was sufficient to permit the use of a $56$ meter flight path; an energy resolution of $3$ eV at $1$ keV was achieved.

At the same time, Professors Havens and Rainwater (pioneers in the field of neutron time-of-flight spectroscopy) and their colleagues at Columbia University were building a new $385$ MeV proton synchrocyclotron a few miles north of the campus (at the Nevis Laboratory). The accelerator was designed to carry out experiments in meson physics and low-energy neutron physics (neutrons generated by the (p, n) reaction).  By 1958, they had produced a pulsed proton beam with duration of $25$ nanoseconds, and had built a $37$ meter flight path \cite{RDRH, DRRH}.  The hydrogenous neutron moderator generated an effective pulse width of about $200$ nanoseconds for $1$ keV-neutrons.  In 1960, the length of the flight path was increased to $200$ meters, thereby setting a new standard in neutron time-of-flight spectroscopy \cite{GRPH}.

\subsection{The Wigner Surmise}
At a conference on Neutron Physics by Time-of-Flight, held in Gatlinburg, Tennessee on November 1st and 2nd, 1956, Professor Eugene Wigner (Nobel Laureate in Physics, 1963) presented his surmise regarding the theoretical form of the spacing distribution of adjacent neutron resonances (of the same spin and parity) in heavy nuclei.  At the time, the prevailing wisdom was that the spacing distribution had a Poisson form (see, however, \cite{GP}).  The limited experimental data then available was not sufficiently precise to fix the form of the distribution (see \cite{Hu}).  The following quotation, taken from Wigner's presentation at the conference, introduces the concept of random matrices in Physics, for the first time: \begin{quote} Perhaps I am now too courageous when I try to guess the distribution of the distances between successive levels.  I should re-emphasize that levels that have different $J$-values (total spin) are not connected with each other.  They are entirely independent.  So far, experimental data are available only on even-even elements.  Theoretically, the situation is quite simple if one attacks the problem in a simple-minded fashion.  The question is simply `what are the distances of the characteristic values of a symmetric matrix with random coefficients?'

We know that the chance that two such energy levels coincide is infinitely unlikely. We consider a two-dimensional matrix,
$\left(\begin{array}{cc}
a_{11} & a_{12}  \\
a_{21} &  a_{22}
\end{array}\right)$, in which case the distance between two levels is $\sqrt{(a_{11} - a_{22})^2 + 4a_{12}^2}$.  This distance can be zero only if $a_{11} = a_{22}$ and $a_{12} = 0$. The difference between the two energy levels is the distance of a point from the origin, the two coordinates of which are $(a_{11} - a_{22})$ and $a_{12}$.  The probability that this distance is $S$ is, for small values of $S$, always proportional to $S$ itself because the volume element of the plane in polar coordinates contains the radius as a factor....

The probability of finding the next level at a distance $S$ now becomes proportional to $SdS$. Hence the simplest assumption will give the probability \begin{equation} \frac{\pi}{2} \rho^2 \exp\left( -\frac{\pi}{4}\rho^2S^2\right) SdS \end{equation}
for a spacing between $S$ and $S + dS$.

If we put $x = \rho S = S/\langle S \rangle$, where $\langle S \rangle$ is the mean spacing, then the probability distribution takes the standard form \begin{equation} p(x)dx\  = \  \frac{\pi}{2}\ x\ \exp\left(-\pi x^2/4\right)dx, \end{equation} where the coefficients are obtained by normalizing both the area and the mean to unity. \end{quote}

The form of the Wigner surmise had been previously discussed by Wigner \cite{Wig1}, and by Landau and Smorodinsky \cite{LS}, but not in the spirit of random matrix theory.
	
The Wigner form, in which the probability of zero spacing is zero, is strikingly different from the Poisson form \begin{equation} p(x)dx\ =\ \exp(-x)dx \end{equation} in which the probability is a maximum for zero spacing. The form of the Wigner surmise had been previously discussed by Wigner himself  \cite{Wig1}, and by Landau and Smorodinsky \cite{LS}, but not in the spirit of random matrix theory.

It is interesting to note that the Wigner distribution is a special case of a general statistical distribution, named after Professor E. H. Waloddi Weibull (1887-1979), a Swedish engineer and statistician \cite{Wei}.  For many years, the distribution has been in widespread use in statistical analyses in industries such as aerospace, automotive, electric power, nuclear power, communications, and life insurance.\footnote{In fact, one of the authors has used Weibull distributions to model run production in major league baseball, giving a theoretical justification for Bill James' Pythagorean Won-Loss formula \cite{Mil3}.}  The distribution gives the lifetimes of objects and is therefore invaluable in studies of the failure rates of objects under stress (including people!).  The Weibull probability density function is
\begin{equation} {\rm Wei}(x; k, \lambda)\ =\ \frac{k}{\lambda} \left(\frac{x}{\lambda}\right)^{k-1} \exp\left(-(x/\lambda)^k\right) \end{equation}
where $x \ge 0$, $k > 0$ is the \emph{shape} parameter, and $\lambda > 0$ is the \emph{scale} parameter.  We see that ${\rm Wei}(x; 2, 2/\sqrt{\pi}) = p(x)$, the Wigner distribution. Other important Weibull distributions are given in the following list.

\begin{itemize}
\item 	${\rm Wei} (x;1, 1) = \exp(-x)$ the Poisson distribution;

\item  ${\rm Wei} (x; 2, \lambda) = {\rm Ray}(\lambda)$, the Rayleigh distribution;

 \item ${\rm Wei} (x; 3, \lambda)$ is approximately a normal distribution.\footnote{Obviously this Weibull cannot be a normal distribution, as they have very different decay rates for large $x$, and this Weibull is a one-sided distribution! What we mean is that for $0 \le x \le 2$ this Weibull is well approximated by a normal distribution which shares its mean and variance, which are (respectively) $\Gamma(4/3) \approx .893$ and $\Gamma(5/3)-\Gamma(4/3)^2 \approx .105$.}

\end{itemize}

For ${\rm Wei}(x; k, \lambda)$, the mean is $\lambda \Gamma\left(1 + (1/k)\right)$, the median is $\lambda \log(2)^{1/k}$, and the mode is $\lambda(k - 1)^{1/k}/k^{1/k}$, if $k>1$. As $k \to \infty$, the Weibull distribution has a sharp peak at $\lambda$. Historically, Frechet introduced this distribution in 1927, and Nuclear Physicists often refer to the Weibull distribution as the Brody distribution \cite{BFFMPW}.

At the time of the Gatlinburg conference, no more than $20$ s-wave neutron resonances had been clearly resolved in a single compound nucleus and therefore it was not possible to make a definitive test of the Wigner surmise. Immediately following the conference, J. A. Harvey and D. J. Hughes \cite{HH}, and their collaborators, working at the fast-neutron-chopper-groups at the high flux reactor at the Brookhaven National Laboratory, and at the Oak Ridge National laboratory, gathered their own limited data, and all the data from neutron spectroscopy groups around the world, to obtain the first \emph{global spacing distribution} of s-wave neutron resonances.  Their combined results, published in 1958, showed a distinct lack of very closely spaced resonances, in agreement with the Wigner surmise.

By late 1959, the experimental situation had improved, greatly.  At Columbia University, two students of Professors Havens and Rainwater completed their Ph.D. theses; one, Joel Rosen \cite{RDRH}, studied the first $55$ resonances in ${\ }^{238}{\rm U} + {\rm n}$ up to $1$ keV, and the other, J Scott Desjardins \cite{DRRH}, studied resonances in two silver isotopes (of different spin) in the same energy region.  These were the first results from the new high-resolution neutron facility at the Nevis cyclotron.

At Harwell, Firk, Lynn, and Moxon \cite{FLM} completed their study of the first $100$ resonances in ${\ }^{238}{\rm U} + {\rm n}$ at energies up to $1.8$ keV; their measurement of the total neutron cross section for the interaction ${\ }^{238}{\rm U} + {\rm n}$  in the energy range $400$--$1800$ eV is shown in Figure \ref{fig:highresonancestudies}.

\begin{figure}[h]
\begin{center}
\scalebox{.7}{\includegraphics{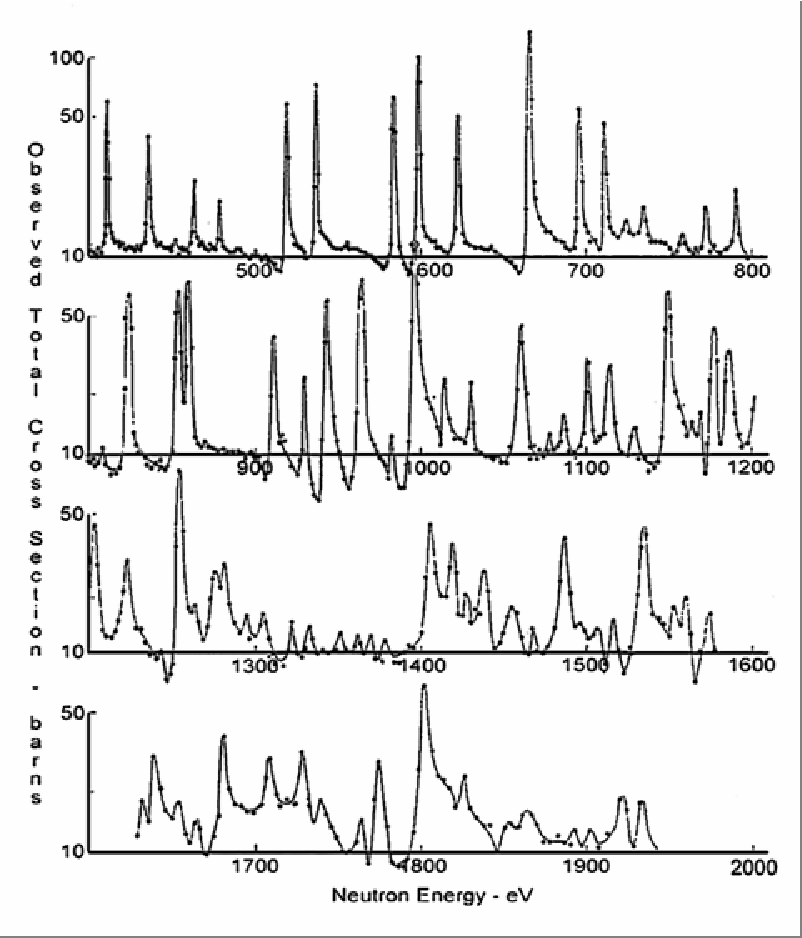}}
\caption{\label{fig:highresonancestudies} High resolution studies of the total neutron cross section of ${\ }^{238}{\rm U}$, in the energy range $400$ eV -- $1800$ eV. The vertical scale (in units of "barns") is a measure of the effective area of the target nucleus.}
\end{center}
\end{figure}

When this experiment began in 1956, no resonances had been resolved at energies above $500$ eV.  The distribution of adjacent spacings of the first $100$ resonances in the single compound nucleus, ${\ }^{238}{\rm U} + {\rm n}$, ruled out an exponential distribution and provided the best evidence (then available) in support of Wigner's proposed distribution.	

Over the last half-century, numerous studies have not changed the basic findings.  At the present time, almost $1000$ s-wave neutron resonances in the compound nucleus ${\ }^{239}{\rm U}$ have been observed in the energy range up to $20$ keV.  The latest results, with their greatly improved statistics, are shown in Figure \ref{fig:firkfig3} \cite{DLL}.	

\begin{figure}[h]
\begin{center}
\scalebox{.5}{\includegraphics{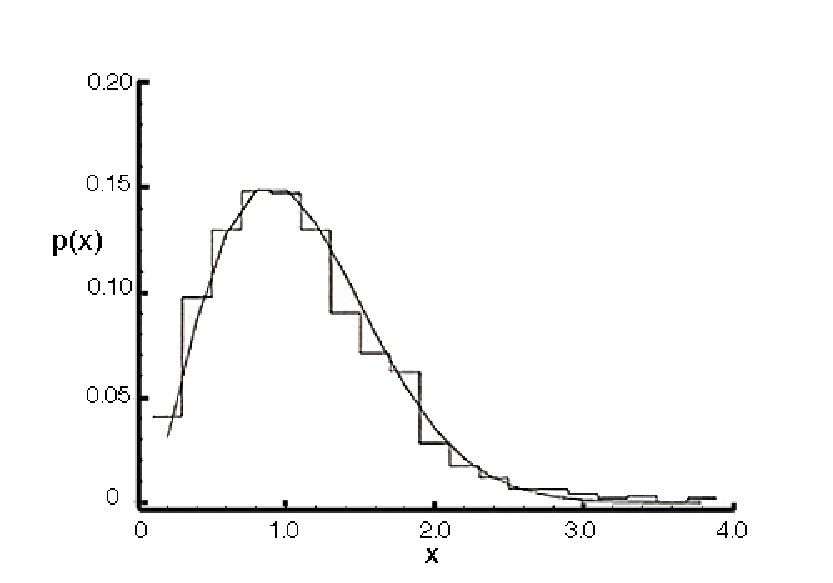}}
\caption{\label{fig:firkfig3} A Wigner distribution fitted to the spacing distribution of 932 s-wave resonances in the interaction ${\ }^{238}{\rm U} + {\rm n}$ at energies up to 20 keV.}
\end{center}
\end{figure}

\subsection{Some Nuclear Models}

It is interesting to note that, during the 1950s and 1960s, the study of the spacing distribution of neutron-induced resonances was far from the main stream of research in nuclear physics; almost all research was concerned with fundamental questions associated with nuclear structure and not with quantum statistical mechanics.  The newly-discovered Shell Model \cite{Ma, LE} of nuclei, and developments such as the Collective Model \cite{Ra, BM} were popular, and quite rightly so, when the successes of these models in accounting for the observed energies, spins and parities, and magnetic moments of nuclear states, particularly in light nuclei (mass numbers $< 20$, say) were considered.
	
These models were not able to account for the spacing distributions in heavy nuclei (mass numbers $A > 150$); the complex nature of so many strongly interacting nucleons prevented any detailed analysis.  However, the treatment of such complex problems had been considered in the mid-1930s, before the advent of the Shell-Model.  The Fermi Gas Model and other approaches based upon quantum versions of classical statistical mechanics and thermodynamics, were introduced, particularly by Bethe \cite{BE}.  The Fermi Gas Model treats the nucleons as non-interacting spin-$\frac12$ particles in a confined volume of nuclear size.  This, of course, seems at variance with the known strong interaction between pairs of nucleons.  However, the argument is made that the nuclear gas is completely degenerate and therefore, because of the Pauli exclusion principle, the nucleons can be considered free!  The model was the first to predict the energy-dependence of the density of states in the nuclear system.
	
The number of states that are available to a freely moving particle in a volume $V$ (the nuclear volume) that has a linear momentum in the range
$p$ to $p + dp$ is \begin{equation} dn\ =\ (4\pi V/h^3)p^2dp. \end{equation} This leads to \begin{equation}  n\ =\ (V/3\pi^2h^3) p_{\max}^3, \end{equation} where the result has been doubled because of the twofold spin degeneracy of the nucleons.  The ``Fermi energy'' $E_F$ corresponds to the maximum momentum: \begin{equation} E_F\ =\ p_{\max}^2/2m_{\rm Nucleon}. \end{equation} The level density $\rho(E^\ast)$ at an excitation energy $E^\ast$  predicted by the model is \begin{equation} \rho(E^\ast) \ =\  \rho(0) \exp\left(2\sqrt{aE^\ast}\right), \end{equation} where $a$ is given by the equation \begin{equation}
	E^\ast\  =\ a(kT)^2 \end{equation} in which $k$ is Boltzmann's constant and $T$ is the absolute temperature. The above expression for the level density is for states of all spins and parities.

In practical cases, $E^\ast$ is about 6 MeV for low- energy neutron interactions; this value leads to the following ratio for the mean level spacing at $E^\ast = 6$ MeV and at $E^\ast = 0$ (the ground state): \begin{equation} \langle D(6\ {\rm MeV})\rangle / \langle D(0)\rangle \ \approx \  4 \cdot 10^{-8}. \end{equation} For $\langle D(0) \rangle = 100$ keV (a practical value), the mean level spacing at  $E^\ast = 6$ MeV is $\approx 4 \cdot 10^{ -3}$ eV, which is more than three orders-of-magnitude smaller than typical values observed in heavy nuclei.
	
Many refinements of the model were introduced over the years; the models take into account spin, parity, and nucleon pairing effects.  A frequently used refined form is \begin{equation}\label{eq:firkeqthreepointonefour} \rho(E^\ast, J)\ =\ \rho(E^\ast, 0)(2J + 1)\exp\left(-(J(J + 1))/2\sigma^2\right), \end{equation} where $\sigma$ is called the ``spin-cut-off parameter''; the value of $\sigma^2$  is typically about $10$.  The predicted spacing distributions for two values of $\sigma$, and their comparison with a Wigner and an exponential distribution is shown in Figure \ref{fig:firkfig5}.

\begin{figure}
\begin{center}
\scalebox{.7}{\includegraphics{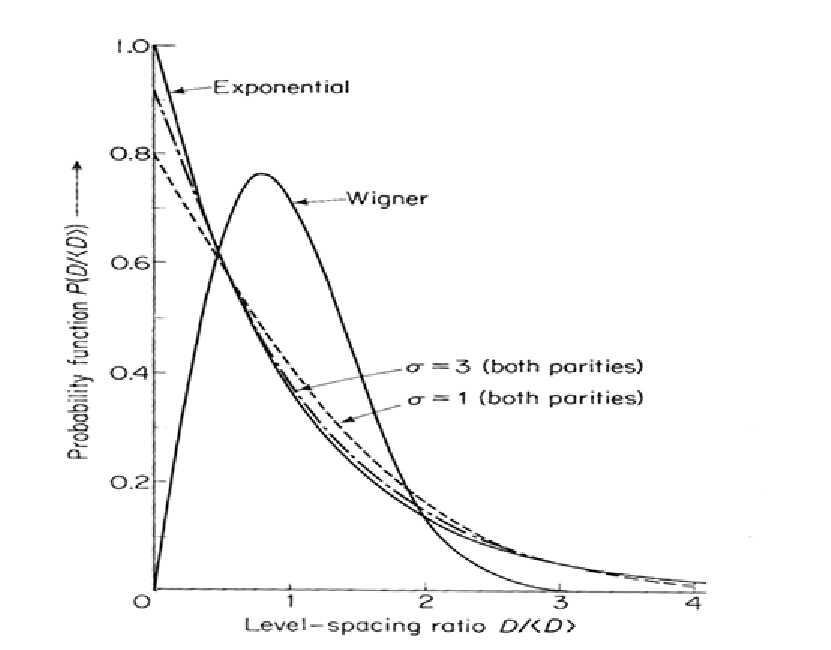}}
\caption{\label{fig:firkfig5} The spacing distribution of adjacent levels of the same spin and parity follows a Wigner distribution.  For a completely random distribution of levels (in both spin and parity) the distribution function is exponential. The distributions for random superpositions of several sequencies (each of which is of a Wigner form with a characteristic spin and parity) are, for level densities given by \eqref{eq:firkeqthreepointonefour} and $\sigma  = 1$ and $3$, found to approach the exponential distribution.}
\end{center}
\end{figure}

\subsection{The Optical Model}
	
In 1936, Ostrofsky et. al. \cite{OBJ} introduced a model of nuclear reactions that employed a complex nuclear potential to account for absorption of the incoming nucleon.  Later, Feshbach, Porter and Weisskopf \cite{FPW} introduced an important development of the model that helped further our understanding of the average properties of parameters used to describe nuclear reactions at low energies.

The following discussion provides insight into the physical content of their model.  Consider the plane-wave solutions of the Schr\"odinger equation: \begin{eqnarray}	 \frac{d^2\phi}{dx^2} + (2m/h^2)[E + V_0 + iW]\phi  \  =\  0, \ \ \ \ \ \ \phi \ = \ \exp(\pm ikx), \end{eqnarray} where the $+$ sign indicates outgoing waves and the $-$ sign indicates incoming waves. The wave number, $k$ is complex:  \begin{equation} k \ =\  \sqrt{(2m/h^2)[(E+V0) + iW]},  \end{equation} which can be written \begin{equation} 	k\ =\ k_{\rm R} + K_{\rm IM}. \end{equation} For $W < (E + V_0)$ (a reasonable assumption) we have  \begin{eqnarray} k_{\rm R} & \ =\ & 1/\lambda\  \approx \  \sqrt{(2m/h2)(E + V_0)} \nonumber\\  K_{\rm IM} &  = &  [W/(E + V_0)](k/2).\end{eqnarray}  Taking typical practical values $E = 10$ MeV, $V_0 = 40$ MeV and $W = 10$ MeV, the wave numbers are $k_{\rm R} \approx 1.5 {\rm fm}-1$ and $K_{\rm IM} \approx k_{\rm R}/10 \approx 0.15 {\rm fm}^{-1}$.

We see that the outgoing solution of the wave equation is \begin{equation} \phi \ =\ \exp\left(ik_{\rm R}x\right) \exp\left(-K_{\rm IM}x\right), \end{equation}
which represents an exponentially attenuated wave.  The wave number $K_{\rm IM}$ is effectively an attenuation coefficient.  The ``decay length'' associated with the probability function $|\phi|^2$ is the ``mean free path'':
\begin{equation}	\Lambda\ =\ 1/2K_{\rm IM}\ =\ (E + V_0)/Wk_{\rm R}. \end{equation} Using the above values for the energies, we obtain $\Lambda \approx 3.2{\rm fm}$. This value is of nuclear dimension, and supports the underlying hypothesis of the Compound Nucleus Model.
	
If the mean spacing of energy levels of a particle of mass $m$ inside the compound nucleus is $\langle D\rangle$, and its wave number is $K$, then the particle covers a distance \begin{equation} d \ \approx \ (h/\langle D\rangle)((hK/2\pi m)\ =\ (h^2K)/(2\pi m\langle D\rangle) \end{equation} inside the nucleus at an average speed  $\langle v\rangle \approx hK/2\pi m$ before it is emitted (or before another indistinguishable particle is emitted).  At an excitation energy of 10 MeV, a mean level spacing  $\langle D\rangle \approx 40$ eV, and a mean lifetime  $h/\langle D\rangle \approx 10^{- 16}$ sec are predicted.  These are reasonable values, considering the crudeness of the model.
	
The level density and level widths increase as the neutron bombarding energy increases; an energy region is therefore reached in which the levels completely overlap.  Cross section measurements then provide information on the average properties of the levels and, in particular, on the \emph{neutron strength function} \cite{LTW} defined as \begin{equation}  S\  =\  \langle \gamma_{\lambda n} \rangle^2 / \langle D \rangle \end{equation}
in which $=\  \langle \gamma_{\lambda n} \rangle^2$ is the average reduced neutron width and $\langle D \rangle$ is the average spacing.  For $s$-wave neutrons, $\gamma_{\lambda n}^2 = 2ka\Gamma_{\lambda n}$, where $k$ is the neutron wave number, $a$ is the nuclear radius, and $\Gamma_{\lambda n}$ is the neutron width of the level $\lambda$.
    	
The average absorption cross section $\langle \sigma_{\rm abs}\rangle$ may be obtained by averaging over the collision function $U$ \cite{LTW}.  The following expressions are then obtained: \begin{eqnarray}  1 - \left|\langle U \rangle \right|^2 & \ = \ & 2\pi \left(\langle \Gamma_{\lambda n} \rangle / \langle D\rangle\right) \nonumber\\ \langle \sigma_{\rm abs}\rangle  &=&  (\pi/k^2)g\left(1 - \left|\langle U \rangle \right|^2\right), \end{eqnarray} where $g$ is  a statistical ``spin weighting factor''.
    	
The term $1 - \left|\langle U \rangle \right|^2$ is directly related to the cross section for the formation of a compound nucleus \cite{FPW} which is, in turn, proportional to the strength function.  \emph{The importance of studying the spacing distribution of resonances, of a given spin and parity, originated in recognizing that the value of $\langle D\rangle$, the average spacing, appears as the denominator in the fundamental strength function. }

\subsection{Further Developments}

The first numerical investigation of the distribution of successive eigenvalues associated with random matrices was carried out by Porter and Rozenzweig in the late 1950s \cite{PR}. They diagonalized a large number of matrices where the elements are generated randomly but constrained by a probability distribution.  The analytical theory developed in parallel with their work: Mehta \cite{Meh1}, Mehta and Gaudin \cite{MG}, and Gaudin \cite{Gau}. At the time it was clear that the spacing distribution was not influenced significantly by the chosen form of the probability distribution. Remarkably, the $n \times n$ distributions had forms given \emph{almost exactly} by the original Wigner $2\times 2$ distribution.

The linear dependence of $p(x)$ on the normalized spacing $x$ (for small $x$) is a direct consequence of the \emph{symmetries} imposed on the Hamiltonian matrix, $H(h_{ij})$.  Dyson \cite{Dy1} discussed the general mathematical properties associated with random matrices and made fundamental contributions to the theory by showing that different results are obtained when different \emph{symmetries} are assumed for $H$.  He introduced three basic distributions; in Physics, only two are important, they are:

\begin{itemize}
\item	the Gaussian Othogonal Ensemble (GOE) for systems in which rotational symmetry and time-reversal invariance holds (the Wigner distribution): $p(x)$ $=$ $(\pi/2)$ $x$ $\exp\left(-(\pi/4)x^2\right)$;

\item the Gaussian Unitary Ensemble (GUE) for systems in which time-reversal invariance does not hold (French et. al. \cite{FKPT}):	$p(x) = (32/\pi^2) x^2\exp(-(\pi/4)x^2)$.

\end{itemize}
The mathematical details associated with these distributions are given in \cite{Meh1}.

The impact of these developments was not immediate in nuclear physics.  At the time, the main research endeavors were concerned with the structure of nuclei--experiments and theories connected with Shell-, Collective-, and Unified models, and with the nucleon-nucleon interaction.  The study of quantum statistical mechanics was far removed from the mainstream.   Almost two decades went by before random matrix theory was introduced in other fields of physics (see, for example, Bohigas, Giannoni and Schmit \cite{BGS} and Alhassid \cite{Al}).


\subsection{Lessons from Nuclear Physics}\label{sec:lessonsnuclearphys}

We have discussed at great length the connections between nuclear physics and number theory, with random matrix theory describing the behavior in these two very different fields. Before we analyze in great detail the success it has had in modeling the zeros of $L$-functions, it's worth taking a few moments to create a dictionary comparing these two subjects.

In nuclear physics the main object of interest is the nucleus. It is a many-bodied system governed by complicated forces. We are interested in studying the internal energy levels. To do so, we shoot neutrons (which have no net charge) at the nucleus, and observe what happens. Ideally we would be able to send neutrons of any energy level; unfortunately in practice we can only handle neutrons whose energies are in a certain band. The more energies at our disposal, the more refined an analysis is possible. Finally, there is a remarkable universality from heavy nucleus to heavy nucleus, where the distribution of spacings between adjacent energy levels depends weakly on the quantum numbers.

Interestingly, there are analogues of all these quantities on the number theory side. The nucleus is replaced by an $L$-function, which is built up as an Euler product of many factors of arithmetic interest. We are interested in the zeros of this function. We can glean information about them by using the explicit formula, \eqref{eq:genexpformula}. We first choose an even Schwartz test function $\phi$ whose Fourier transform $\widehat{\phi}$ has compact support. The explicit formula relates sums of $\phi$ at the zeros of the $L$-function to weighted sums of $\widehat{\phi}$ at the primes. Thus the more functions $\widehat{\phi}$ where we can successfully execute the sums over the primes, the more information we can deduce about the zeros. Unfortunately, in practice we can only evaluate the prime sums for $\widehat{\phi}$ with small support (if we could do arbitrary $\widehat{\phi}$, we could take a sequence converging to the constant function 1, whose inverse Fourier transform would be a delta spike at the origin and thus tell us what is happening there). Similar to the weak dependence on the quantum numbers, the answers for many number theory statistics depend weakly on the Satake parameters (whose moments are the Fourier coefficients in the series expansion of the $L$-function). In particular, the spacing between adjacent zeros is independent of the distribution of these parameters, though other statistics (such as the distribution of the first zero or first few zeros above the central point) fall into several classes depending on their distribution.

We collect these correspondences in the table below. While the structures studied in the two fields are very different, we can unify the presentations. In both settings we study the spacings between objects. While there are exact rules that govern their behavior, these are complicated. We gain information through interactions of test objects with our system; as we can only analyze these interactions in certain windows, we gain only partial information on the items of interest.  \\ \

\begin{center}
\begin{tabular}{lcccc}
   \textbf{Item} & \ \ \ \ \ \ & \textbf{Nuclear Physics}  & \ \ \ \ \ \ & \textbf{Number Theory}  \\
   \hline
   Object & \ \ \ \ \ \ & nucleus & \ \ \ \ \ \ & $L$-function  \\
   Events & \ \ \ \ \ \ & energy levels & \ \ \ \ \ \ & zeros \\
   Probe & \ \ \ \ \ \ & neutron (no net charge) & \ \ \ \ \ \ & test function $\phi$ (Schwartz) \\
   Restriction & \ \ \ \ \ \ & neutron's energy & \ \ \ \ \ \ & ${\rm supp}(\widehat{\phi})$ \\
   Individuality & \ \ \ \ \ \ & quantum numbers & \ \ \ \ \ \ & Satake parameters \\
  \hline
\end{tabular}
\end{center}

\ \\

We end by extracting some lessons from nuclear physics for number theory. The first is the importance of using the proper test function, or related to that the proper statistic. In the gold-foil experiments (1908 to 1913) positively charged alpha particles, which are helium nuclei, were used. Because they have a net positive charge, they are repelled by the nucleus they are probing. With the discovery of the neutron in 1932, physicists had a significantly better tool for studying the nucleus. As the machinery improved, more and more neutron energy levels were available, which led to sharper resolutions of the internal structure. We see variants of these on the number theory side, from restrictions on the test function to the consequences of increasing support. For example, when Wigner made his bold conjectures the data was not sufficiently detailed to rule out Poissonian behavior; that was not done until later when better experiments were carried out. Similar situations arise in number theory, where some statistics are consistent with multiple models and only by increasing the support are we able to determine the true underlying behavior. Finally, while there is a remarkable universality in behavior of the zeros, as for statistics such as adjacent spacings or $n$-level correlations the exact form of the $L$-function coefficients do not matter, these distributions do affect the rate of convergence to the random matrix theory predictions, as well as govern other statistics.


\section[From Class Numbers to Pair Correlation \& Random Matrix Theory]{From Class Numbers to Pair Correlation and Random Matrix Theory}\label{sec:classtopaircorr}

The discovery that the pair correlation of the zeros of the Riemann zeta function (and other statistics of its zeros, and the zeros of other $L$-functions) are related to eigenvalues of random matrix ensembles has its beginnings with one of the most challenging problems in analytic number theory: the class number problem. Hugh Montgomery's investigation into the vertical distribution of the nontrivial zeros of $\zeta(s)$ arose during his work with Weinberger \cite{MW} on the class number problem. We give a short introduction to this problem to motivate Montgomery's subsequent work on the differences between zeros of $\zeta(s)$. We assume the reader is familiar with the basics of algebraic number theory and $L$-functions; an excellent introduction is Davenport's classic \emph{Multiplicative Number Theory} \cite{Da}. For those wishing a more detailed and technical discussion of the class number problem and its history, see \cite{Gol3, Gol4}. We then continue with a discussion of Montgomery's work on pair correlation, followed by the work of Odlyzko and others on spacings between adjacent zeros. After introducing the number theory motivation and results, we reveal the connection to random matrix theory, and conclude with a discussion of the higher level correlations, other related statistics, and open problems.

As there are too many areas of current research to describe them all in detail in a short article, we have chosen to concentrate on two major areas: the main terms for the $n$-level correlations, and the lower order terms; thus we do not describe many other important areas of research, such as the determination of moments or value distribution. The main terms are believed to be described by random matrix theory; however, the lower order terms depend on subtle arithmetic of the $L$-functions, and there we can see different behavior. The situation is very similar to that of the Central Limit Theorem, and we will describe these connections and viewpoints in greater detail below.

\subsection{The Class Number Problem}\label{sec:classnumberproblem}

Let $K=\mathbb{Q}(\sqrt{-q})$ be the imaginary quadratic field associated to the negative fundamental discriminant $-q$. Here we have that $-q$ is congruent to 1 $(\text{mod }4)$ and square-free or $-q=4m$, where $m$ is congruent to 2 or 3 $(\text{mod }4)$ and square-free. The class number of $K$, denoted $h(-q)$, is the size of the group of ideal classes of $K$. When $h(-q)=1$,  the ring of integers of $K$, denoted $\mathcal{O}_K$, has unique factorization. Such an occurrence (the class number one problem, discussed below) is rare, and the class number $h(-q)$ may be thought of as a measure on the failure of unique factorization in $\mathcal{O}_K$.

One of the most difficult problems in analytic number theory is to estimate the size of $h(-q)$ effectively. Gauss \cite{Ga} showed that $h(-q)$ is finite and further conjectured that $h$ tends to infinity as $-q$ runs over the negative fundamental discriminants. This conjecture was proved by Heilbronn \cite{He} in 1934. Thus, while it is settled that there are only finitely many imaginary quadratic fields with a given class number $h(-q)$, an obvious question remains: can we list all imaginary quadratic fields $K$ with a given class number $h(-q)$? This is the class number problem.

One may easily deduce an upper bound on $h(-d)$ via Dirichlet's class number formula. For $\Re(s)>1$, let $L(s,\chi_{-q})$ denote the Dirichlet $L$-function
\begin{equation}
L(s,\chi_{-q})\ := \ \sum_{n=1}^{\infty}\frac{\chi_{-q}(n)}{n^s},
\end{equation}
where $\chi_{-q}(n)$ is the Kronecker symbol associated to the fundamental discriminant $-q$. In order to prove the equidistribution of primes in arithmetic progression, Dirichlet derived the class number formula,
\begin{equation}\label{classnumberformula}
h(-q)\ = \ \frac{w\sqrt{q}}{2\pi}L(1,\chi_{-q}),
\end{equation}
where $w$ denotes the number of roots of unity of $K=\mathbb{Q}(\sqrt{-q})$:
\begin{equation}
w \ = \  \begin{cases}
2 & \text{if } q>4\\
4 & \text{if } q=4\\
6 & \text{if } q=3.\\
\end{cases}
\end{equation} Dirichlet needed to show $L(1,\chi_{-q}) \neq 0$, which is immediate from the class number formula as $h(-q) \ge 1$. This connection between class numbers and zeros of $L$-functions is almost 200 years old, and illustrates how knowledge of zeros of $L$-functions yields information on a variety of important problems.

Instead of using the class number formula to prove non-vanishing of $L$-functions, we can use results on the size of $L$-functions to obtain bounds on the class number. Combining \eqref{classnumberformula} with that fact that $L(1,\chi_{-q})\ll \log q$, it follows that $h(-q) \ll \sqrt{q}\log q$. On the other-hand, Siegel \cite{Sie2} proved that for every $\varepsilon>0$ we have $L(1,\chi_{-q}) > c(\varepsilon)q^{-\varepsilon}$, where $c(\varepsilon)$ is a constant depending on $\varepsilon$ that is not numerically computable for small $\varepsilon$. Upon inserting this lower bound in \eqref{classnumberformula}, it follows that $h(-q) \gg c(\varepsilon)q^{1/2-\varepsilon}$; however this does not help us solve the class number problem because the implied constant is ineffective.\footnote{In other words, while the above is enough to prove that the class number tends to infinity, we cannot use that argument to produce an explicit constant $Q_n$ for each $n$ so that we could assert that the class number is at least $n$ if $q \ge Q_n$. One of the best illustrations of the importance of \emph{effective} constants is the following joke: There is a constant $T_0$ such that if all the non-trivial zeros of $\zeta(s)$ in the critical strip up to height $T_0$ are on the critical line, then they all are and the Riemann Hypothesis is true; in other words, it suffices to check up to a finite height! To see this, if the Riemann Hypothesis is true we may take $T_0$ to be 0, while if it is false we take $T_0$ to be 1 more than the height of the first exemption. We have therefore shown a constant exists, but such information is completely useless!} Computing an effective lower bound on $h(-q)$ is very difficult task.

The class number one problem was eventually solved independently by Heegner \cite{Hee}, Stark \cite{St1} and Baker \cite{Ba1}. For $h(-d)=2$, the class number problem was solved independently by Stark \cite{St2}, Baker \cite{Ba2} and Montgomery and Weinberger \cite{MW}. In 1976, Goldfeld \cite{Gol1,Gol2} showed that if there exists an elliptic curve $E$ whose Hasse-Weil $L$-function has a zero at the central point $s=1$ of order at least three, then for any $\varepsilon>0$, we have $h(-q) > c_{\varepsilon,E}\log(|-q|)^{1-\varepsilon}$, where the constant $c_{\varepsilon,E}$ is effectively computable. In other words, Goldfeld proved that if there exists an elliptic curve whose Hasse-Weil $L$-function has a triple zero at $s=1$, then the class number problem is reduced to a finite amount of computations.  In 1983, Gross and Zagier \cite{GZ} showed the existence of such an elliptic curve. Combining this deep work of Gross-Zagier with a simplified version of Goldfield's argument to reduce the amount of necessary computations, Oesterl\'{e} \cite{Oe} produced a complete list of imaginary quadratic fields with $h(-q)=3.$ To date, the class number problem is resolved for all $1\le h(-q) \le 100$. (In addition to the previous references, see Arnon \cite{Ar}, Arnon, Robinson, and Wheeler \cite{ARW}, Wanger \cite{Wan} and Watkins \cite{Wa}.)

Combining their work with results of Stark \cite{St2} and Lehmer, Lehmer, and Shanks \cite{LLS}, Montgomery and Weinberger gave a complete proof for the class number two problem. Their proof is based on the curious Deuring-Heilbronn phenomenon, which implies that if $h(-d)<d^{1/4-\delta}$ then the low-lying nontrivial zeros of many quadratic Dirichlet $L$-functions are on the critical line, at least up to some height depending on $d$, $\delta$, and the $L$-functions. For an overview of the Deuring-Heilbronn phenomenon, see the survey article by Stopple \cite{Sto1}. Montgomery and Weinberger also establish that if the class number is a bit smaller, then one can show that these nontrivial zeros on the critical line are very evenly spaced. Moreover, more precise information about the vertical distribution of these zeros would imply an effective lower bound on $h(-d)$. Montgomery and Weinberger write: \begin{quote} Let $\rho=1/2+i\gamma$ and $\rho^{\prime}=1/2+i\gamma^{\prime}$ be consecutive zeros on the critical line of an $L$-function $L(s,\chi)$, where $\chi$ is a primitive character $(\text{mod }k)$. Put
\begin{equation}
\lambda(K)\ = \ \min\frac{1}{2\pi}|\gamma-\gamma^{\prime}|\log K,
\end{equation}
where the minimum is over all $k\le K$, all $\chi \,(\text{mod } k)$, and all $\rho=1/2+i\gamma$ of $L(s,\chi)$ with $|\gamma|\le 1$. In this range the average of $|\gamma-\gamma^{\prime}|$ is $2\pi/\log k$, so trivially $\limsup \lambda(K)\le 1$. Presumably $\lambda(K)$ tends to 0 as $K$ increases; if this could be shown effectively then the effective lower bound $h>d^{1/4-\varepsilon}$ would follow. In fact the weak inequality $\lambda(K)<1/4-\delta$ for $K>K_0$ implies that $h>d^{(1/2)\delta-\varepsilon}$ for $d>C(K_0,\varepsilon)$; the function $C(K_0,\varepsilon)$ can be made explicit. Even $\lambda(K)<\frac{1}{2}-\delta$ has striking consequences.\end{quote}

\subsection{Montgomery's Pair Correlation of the Zeros of $\zeta(s)$}\label{sec:paircorrzeroszeta}

We have seen that the class number problem is related to another very difficult question in analytic number theory: \emph{What is the vertical distribution of the zeros of the Riemann zeta function (and general $L$-functions) on the critical line?}

Given an increasing sequence $\{\alpha_n\}_{n=1}^\infty$ and a box $B \subset \mathbb{R}^{n-1}$, the $n$-level correlation is defined by \begin{eqnarray}\label{eqnlevelcorr}
\lim_{N \rightarrow \infty} \frac{\#\left\{\left(\alpha_{j_1}-\alpha_{j_2}, \dots, \alpha_{j_{n-1}} - \alpha_{j_n}\right) \in B, j_i \neq j_k  \right\}}{N}.
\end{eqnarray}
The pair correlation is the case $n=2$, and through combinatorics knowing all the correlations yields the spacing between adjacent events (see for example \cite{Meh2}). In 1973, Montgomery \cite{Mon} was able to partially determine the behavior for the pair correlation of zeros of the Riemann zeta function, $\zeta(s)$, which led to new results on the number of simple zeros of $\zeta(s)$ and the existence of gaps between zeros of $\zeta(s)$ that are closer together than the average. One of the most striking contributions in Montgomery's paper, however, is his now famous pair correlation conjecture. We first state his conjecture and then discuss related work on spacings between adjacent zeros in the next subsection; after these have been described in detail we then revisit these problems and describe the connections with random matrix theory in \S\ref{sec:earlyyearsNTandRMT}. See \cite{CI} for more on connections between spacings of zeros of $\zeta(s)$ and the class number.\\ \

\begin{center}
\fbox{
\begin{minipage}{\textwidth}
\begin{conjecture}[Montgomery's pair correlation conjecture]\label{conj:pcc}
Assume the Riemann hypothesis, and let $\gamma,\gamma^{\prime}$ denote the imaginary parts of nontrivial zeros of $\zeta(s)$. For fixed $0<a<b<\infty$,
\begin{eqnarray}\label{paircorrelationconjecture}
& & \lim_{T\rightarrow \infty }\frac{\#\{\gamma,\gamma^{\prime}: 0 \le \gamma, \gamma^{\prime}\le T, 2\pi a(\log T)^{-1}\le \gamma-\gamma^{\prime}\le2\pi b(\log T)^{-1}\}}{\frac{T}{2\pi}\log T}\nonumber\\ & &  = \  \int_{a}^{b}1-\left(\frac{\sin \pi u}{\pi u}\right)^2\, du.
\end{eqnarray}
\end{conjecture}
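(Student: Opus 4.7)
The plan is to follow Montgomery's original strategy: translate the pair correlation into a Parseval identity involving a prime-side sum, evaluate that sum via the explicit formula, and extract the spacing density by Fourier inversion. First I would fix notation: assume the Riemann Hypothesis so nontrivial zeros are $\rho = 1/2 + i\gamma$, and introduce the normalized pair correlation function
\begin{equation}
F(\alpha, T) \ := \ \left(\frac{T}{2\pi}\log T\right)^{-1} \sum_{0 < \gamma, \gamma' \le T} T^{i\alpha(\gamma - \gamma')} \, \frac{4}{4 + (\gamma-\gamma')^2},
\end{equation}
so that $F$ is real, nonnegative, and even in $\alpha$. The function $F(\alpha, T)$ encodes the pair correlation because Parseval's identity gives, for any even test function $r$ with $\widehat{r}$ of compact support,
\begin{equation}
\sum_{0 < \gamma, \gamma' \le T} r\!\left(\frac{(\gamma - \gamma')\log T}{2\pi}\right) \frac{4}{4+(\gamma-\gamma')^2} \ = \ \frac{T}{2\pi}\log T \int_{-\infty}^{\infty} \widehat{r}(\alpha) \, F(\alpha, T) \, d\alpha.
\end{equation}

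Second I would evaluate $F(\alpha, T)$ via the explicit formula. Applying a version of \eqref{eq:genexpformula} expresses $\sum_\gamma T^{i\alpha\gamma}(\cdots)$ as a weighted sum over prime powers $p^k \le T^{|\alpha|}$. Squaring out the prime side and invoking the prime number theorem in the form $\sum_{n \le x}\Lambda(n)^2 \sim x \log x$, one obtains, for $T$ large and $0 \le |\alpha| \le 1 - \varepsilon$,
\begin{equation}
F(\alpha, T) \ = \ |\alpha| + T^{-2|\alpha|}\log T \, (1+o(1)),
\end{equation}
where the second term arises from the diagonal contribution when one pairs each zero with itself. This is Montgomery's theorem, conditional on RH. The argument extends, by continuity and nonnegativity, to give $F(\alpha, T) \ge |\alpha|(1+o(1))$ on $|\alpha| \le 1$.

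Third I would verify that the conjectured density matches. The function $g(\alpha)$ equal to $|\alpha|$ for $|\alpha| \le 1$ and $1$ for $|\alpha| > 1$ has distributional inverse Fourier transform $\delta(u) + 1 - \bigl(\tfrac{\sin\pi u}{\pi u}\bigr)^2$; the $\delta$-term accounts for the diagonal pairs $\gamma = \gamma'$, while the remainder gives the conjectured off-diagonal density. Choosing $r$ to approximate the indicator $\chi_{[a,b]}$ with $0 < a < b$, substituting into the Parseval formula above, and letting $T \to \infty$ recovers the right-hand side of \eqref{paircorrelationconjecture}, \emph{provided} $F(\alpha, T)$ behaves as conjectured on all of $\mathbb{R}$.

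The hard part — and where my plan stops being a proof — is the behavior of $F(\alpha, T)$ for $|\alpha| > 1$. Montgomery's own work yields the asymptotic only in the range $|\alpha| < 1$, corresponding to test functions $\widehat{r}$ supported in $(-1,1)$; extending beyond $|\alpha|=1$ requires essentially a form of the Hardy–Littlewood prime pair conjecture, since the squared prime sum on the explicit-formula side then picks up genuine off-diagonal correlations $\sum_{n} \Lambda(n)\Lambda(n+h)$ that are not controlled by any currently available techniques, nor even by RH itself. Thus the main obstacle is not the formal passage from prime statistics to zero statistics — that is the clean content of the explicit formula and Parseval — but the arithmetic input on correlations of the von Mangoldt function at shifts, which appears to lie deeper than GRH. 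A complete proof would seemingly require either a breakthrough on prime pair correlations or a fundamentally different approach, such as a Hilbert–P\'olya-type spectral realization of the zeros in which the pair correlation is read off from the symmetry class of the underlying operator.
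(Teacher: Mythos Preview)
Your proposal is correct and takes essentially the same approach as the paper: you reduce to studying $F(\alpha,T)$ via an explicit formula and a second-moment computation to obtain Montgomery's theorem for $|\alpha|<1$, and you correctly identify that the extension to $|\alpha|\ge 1$ rests on Hardy--Littlewood-type estimates for $\sum_n \Lambda(n)\Lambda(n+h)$, which is precisely the heuristic the paper invokes. Since the statement is a conjecture, your acknowledgement that the argument stops at $|\alpha|=1$ is exactly right and mirrors the paper's own treatment.
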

Thus Montgomery's pair correlation conjecture is the statement that the pair correlation of the zeros of $\zeta(s)$ is
\begin{equation}
1-\left(\frac{\sin \pi u}{\pi u}\right)^2.
\end{equation}
\end{minipage}
}
\end{center}

\ \\

Notice that the factor $1-(\sin \pi u /  \pi u)^2$ suggests a `repulsion' between the zeros of $\zeta(s)$. The notion that the zeros cannot be too close to one another was also revealed in the aforementioned work of Montgomery and Weinberger as a consequence of the Deuring-Heilbronn phenomenon.

To arrive at his conjecture, Montgomery introduced the function
\begin{equation}\label{eq:MontFxT} F(x,T)\ = \ \sum_{0<\gamma,\gamma^\prime\leq T}x^{i(\gamma-\gamma^\prime)}w(\gamma-\gamma^\prime),\end{equation}
where $w(u)$ is a weight function given by $w(u) = 4/(4+u^2)$.
Let $F(\alpha)$ denote $F(x,T)$ with $x$ set as $x=T^\alpha$; then
\begin{equation}\label{MontF}
F(\alpha) \ = \  F(\alpha,T) \ = \  \left(\frac{T}{2\pi}\log T\right)^{-1}\sum_{0 \le \gamma,\gamma^{\prime}\le T}T^{i\alpha(\gamma - \gamma^{\prime})}w(\gamma-\gamma^\prime),
\end{equation}
where $\alpha$ and $T\ge 2$ are real. $F(\alpha)$ is a real, even function.
Let $r(u) \in L^1$, and define its Fourier transform by
\begin{equation}\label{ftrans}
\hat{r}(\alpha) \ = \  \int_{-\infty}^{\infty} r(u)e^{2\pi i \alpha u}du.
\end{equation}
The function $r$ is a test function that replaces the `box' in the statement of the pair correlation conjecture~\ref{conj:pcc}.
One notable item about Montgomery's pair correlation conjecture is that there is no
restriction on the length of the interval $[a,b]$; the difference $b-a$ is permitted to be
arbitrarily small. In the language of smooth test functions, this translates to
permitting arbitrarily large support on the Fourier transform $\hat r$.

If $\hat{r}(\alpha) \in L^1$, then upon multiplying \eqref{MontF} by $\hat{r}(\alpha)$ and integrating, we deduce
\begin{equation}\label{MontR}
\sum_{0<\gamma,\gamma^{\prime} \le T} r\left(\frac{(\gamma^{\prime}\!-\!\gamma)\log T}{2\pi}w(\gamma^{\prime}\!-\!\gamma) \right)\ \sim\ \left(\frac{T}{2\pi}\log T\right)\int_{-\infty}^{\infty}\hat{r}(\alpha)F(\alpha)d\alpha
\end{equation}
as $T$ tends to infinity. If the Riemann hypothesis is
true, the asymptotic~\eqref{MontR} connects the pair correlation of $\zeta(s)$ to the
function $F(\alpha)$ given in~\eqref{MontF}. Montgomery proceeded to prove an important
special case of Conjecture~\ref{conj:pcc} for a class of test functions with Fourier
transform supported in $(-1,1)$.

\begin{center}
\fbox{
\begin{minipage}{\textwidth}
\begin{theorem}[Montgomery's theorem]\label{thm:mont}
  Assume the Riemann hypothesis. For real $\alpha$, $T\geq2$, let $F(\alpha)$ be defined
  by~\eqref{MontF}. Then $F(\alpha)$ is real, and $F(\alpha)=F(-\alpha)$.
  If $T>T_0(\epsilon)$ then $F(\alpha)\geq-\epsilon$ for all $\alpha$. For fixed $\alpha$
  satisfying $0\leq\alpha<1$ we have
  \begin{equation}\label{MontgomeryF}
    F(\alpha) \ = \  \alpha + o(1) + T^{-2\alpha}\log T(1+o(1))
  \end{equation}
  uniformly for $0 \le \alpha < 1$ as $T$ tends to infinity.
\end{theorem}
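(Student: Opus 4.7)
The realness and evenness of $F(\alpha)$ follow immediately from symmetry. Swapping the summation variables $\gamma \leftrightarrow \gamma'$ in the defining sum, we have $w(\gamma'-\gamma) = w(\gamma-\gamma')$ (since $w$ is even) and $T^{i\alpha(\gamma'-\gamma)} = \overline{T^{i\alpha(\gamma-\gamma')}}$. Thus the sum equals its own complex conjugate, so $F(\alpha)$ is real, and equivalently $F(-\alpha) = F(\alpha)$, so $F$ is even. The substantive content of the theorem is the asymptotic for $0 \le \alpha < 1$ together with the lower bound $F(\alpha) \ge -\epsilon$; the plan is to extract both from a single mean-square identity.

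The central device is to view $F(\alpha)$ as the $L^2$-norm of a single linear statistic over the zeros, and then to apply an explicit formula to rewrite that statistic as a sum over primes. Under RH, one derives from $-\zeta'/\zeta(s)$ together with its Hadamard product an identity of the schematic form
\begin{equation}
   Z(x,t) \ := \ \sum_\gamma x^{i\gamma}\, k(t-\gamma) \ = \ x^{-1/2}\, P(x,t) + E(x,t),
\end{equation}
where $k(u) := 1/(1+u^2)$ is the kernel, $P(x,t)$ is a Dirichlet polynomial of length $x$ built from $\Lambda(n) n^{-it}$ together with a (carefully interpreted) tail over $n > x$, and $E(x,t)$ is a controlled error from the explicit formula. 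The choice of kernel is dictated by the identity
\begin{equation}
   \int_{-\infty}^\infty k(t-\gamma)\, k(t-\gamma')\, dt \ = \ \frac{\pi}{2}\, w(\gamma-\gamma'),
\end{equation}
which is exactly the Montgomery weight $w(u) = 4/(4+u^2)$.

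Next, compute $\int_0^T |Z(x,t)|^2\, dt$ in two ways. Expanding the square and using the kernel-pairing identity (after extending the $t$-integration from $[0,T]$ to $\mathbb{R}$, whose boundary correction is absorbed into the error), the zero side yields, up to explicit constants, the quantity $F(\alpha)\cdot(T/2\pi)\log T$ upon setting $x = T^\alpha$. On the other hand, squaring the right-hand side of the explicit formula and applying the Montgomery--Vaughan mean value theorem
\begin{equation}
   \int_0^T \Big|\sum_n a_n n^{it}\Big|^2\, dt \ = \ \sum_n |a_n|^2\bigl(T + O(n)\bigr)
\end{equation}
evaluates the prime side in closed form. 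The diagonal $T$-contribution, fed by the arithmetic input $\sum_{n\le x}\Lambda(n)^2/n \sim \tfrac{1}{2}(\log x)^2$, produces the main term $\alpha + o(1)$ after normalization by $(T/2\pi)\log T$; the $O(n)$-correction, fed by $\sum_{n\le x}\Lambda(n)^2 \sim x\log x$, produces the companion term $T^{-2\alpha}\log T(1+o(1))$. The hypothesis $\alpha < 1$ (equivalently $x < T$) is exactly what keeps the $O(n)$-contribution genuinely subordinate to the $T$-contribution. Finally, the lower bound $F(\alpha) \ge -\epsilon$ for $T$ large is immediate: since $\int_0^T |Z(x,t)|^2 dt \ge 0$, the identity exhibits $F(\alpha)$ as a non-negative quantity minus the explicit diagonal correction and bounded errors.

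The principal obstacle is the rigorous derivation of the explicit formula for $Z(x,t)$ with an error term $E(x,t)$ that is small uniformly in $t \in [0,T]$ and in $x = T^\alpha$ throughout $0 \le \alpha < 1$. This requires a careful contour shift across the critical strip, together with classical zero-density inputs to control the zeros swept over, and a suitable interpretation (via Mellin truncation or analytic continuation) of the formally divergent tail $\sum_{n>x}\Lambda(n)\,n^{-it}$. Controlling the cross terms between $P(x,t)$, its tail, and $E(x,t)$ when squaring, and justifying the boundary correction in passing from $\int_0^T$ to $\int_{-\infty}^\infty$ in the kernel-pairing step, constitute the bulk of the technical bookkeeping. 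Once this machinery is in place, the asymptotic \eqref{MontgomeryF} emerges cleanly from the two classical prime-counting inputs noted above, and the non-negativity of the mean square delivers $F(\alpha) \ge -\epsilon$ at no extra cost.
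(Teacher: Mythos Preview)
Your overall strategy is exactly the paper's (and Montgomery's original): realize $F(\alpha)$ as the normalized mean square over $[0,T]$ of the zero-sum $\sum_\gamma x^{i\gamma}/(1+(t-\gamma)^2)$, convert this sum to primes via an explicit formula, and evaluate by the mean-value identity $\int_0^T|\sum a_n n^{-it}|^2\,dt=\sum|a_n|^2(T+O(n))$. The evenness/realness argument and the extraction of $F(\alpha)\ge-\epsilon$ from non-negativity of a mean square are likewise the same.

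There is, however, a genuine gap in your accounting for the term $T^{-2\alpha}\log T$. It does \emph{not} come from the $O(n)$ correction in the Montgomery--Vaughan theorem. In Montgomery's explicit formula (with $\sigma=3/2$) there is, in addition to the two-sided prime sum and a true error $O(x^{1/2}\tau^{-1})$, a term $x^{-1+it}(\log\tau+O(1))$ arising from the simple pole of $\zeta(s)$ at $s=1$. It is the mean square of \emph{this} pole term, $\int_0^T|x^{-1+it}\log\tau|^2\,dt=(T/x^2)(\log^2 T+O(\log T))$, that after setting $x=T^\alpha$ and dividing by $T\log T$ produces $T^{-2\alpha}\log T(1+o(1))$. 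Your schematic $Z=x^{-1/2}P+E$ with $E$ a ``controlled error'' buries this contribution; it is a main term, not an error. By contrast, the $O(n)$ correction applied to the prime sum gives only $O(x\log x)$, hence $O(T^{\alpha-1})=o(1)$ after normalization---the wrong shape entirely (increasing rather than decreasing in $\alpha$), and in particular vanishing at $\alpha=0$ where one must recover $F(0)\sim\log T$. Relatedly, the arithmetic input you cite for the $\alpha$ term, $\sum_{n\le x}\Lambda(n)^2/n\sim\tfrac12(\log x)^2$, is not what actually arises: with Montgomery's weights one needs $(1/x)\bigl[\sum_{n\le x}\Lambda(n)^2(n/x)+\sum_{n>x}\Lambda(n)^2(x/n)^3\bigr]$, whose $T$-diagonal is $T(\log x+O(1))$ via the prime number theorem with error term.
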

\end{minipage}
}
\end{center}

Thus, for any function $r(u) \in L^{1}$ with Fourier transform $\hat{r}(\alpha)$ supported in $(-1,1)$, one can use \eqref{MontgomeryF} to evaluate the sums appearing in \eqref{MontR}.
For $\alpha\ge1$, Montgomery further conjectured, with heuristic arithmetic justification,
that
\begin{equation}\label{eq:spcc}
F(\alpha)\ =\ 1+o(1)\quad\text{uniformly in bounded intervals as }T\rightarrow\infty.\end{equation}
This conjecture, combined with \eqref{MontgomeryF} gives a complete picture of the function
$F(\alpha)$, which led Montgomery to make his pair correlation conjecture.

\subsection[Proof of Pair Correlation Conjecture for Restricted $a, b$]{Proof of Montgomery's Pair Correlation Conjecture for Restricted $a, b$}

We now provide greater detail about Montgomery's original proof~\cite[\S3, pp. 187--191]{Mon}
of his theorem (Theorem~\ref{thm:mont}). The point of entry is an explicit formula due to
him.

The role of explicit formul\ae\ cannot be overstated when working with $\zeta(s)$ or
$L$-functions, as these formul\ae\ unlock the multiplicative structure implicit in the
Euler product, usually via the argument principal applied to the logarithmic derivative.
Assuming the Riemann hypothesis, and writing critical zeros of $\zeta(s)$ as
$1/2+i\gamma$ and $\gamma$ real, with $1<\sigma<2$ and $x\geq1$, Montgomery proved that
\begin{equation}\label{eq:explicit}\begin{aligned}[b]&(2\sigma-1)\sum_\gamma
  \frac{x^{i\gamma}}{\left(\sigma-\frac12\right)^2+\left(t-\gamma\right)^2} \\
  &\hspace{0.25in}\begin{aligned}[b]
    =\ &-x^{-1/2}\left(\sum_{n\leq x}\Lambda(n)\left(\frac xn\right)^{1-\sigma+it}
    +\sum_{n>x}\Lambda(n)\left(\frac xn\right)^{\sigma+it}\right) \\
  &+x^{1/2-\sigma+it}(\log\tau+O_\sigma(1))+O_\sigma(x^{1/2}\tau^{-1}),
\end{aligned}\end{aligned}\end{equation}
where $\tau=|t|+2$ and the implied constants depend only on $\sigma$.

\begin{proof}[Proof of Montgomery's theorem (Theorem~\ref{thm:mont}); {\cite[\S3, pp. 187--191]{Mon}}]
Placing $\sigma=3/2$ in~\eqref{eq:explicit}, and letting $L(x,t)$ and $R(x,t)$ denote
the left and right sides, respectively, we now wish to evaluate the second moments
of both sides; i.e.
$\int_0^T\left|L(x,t)\right|^2dt$, $\int_0^T\left|R(x,t)\right|^2dt$.
The reason to do this is that, as we will see, $F(\alpha)$ falls out of the second moment
of the left side, and we end up with something tractable for the second moment of the
right side. Thus the equation of the two moments gives us an identity for $F(\alpha)$.

By showing the contribution of those ordinates $\gamma$ above height $T$ is $O(\log^3 T)$,
Montgomery obtained
\begin{equation}\int_0^T\left|L(x,t)\right|^2dt\ = \ 4\sum_{\substack{0<\gamma\leq T \\
0<\gamma'\leq T}}x^{i(\gamma-\gamma')}\int_0^T\frac{dt}{(1+(t-\gamma)^2)(1+(t-\gamma')^2)}
+O(\log^3 T).\end{equation} Note that the range of integration may be extended to all of $\mathbb{R}$ at a
penalty no greater in magnitude than $O(\log^2 T)$; we then have
\begin{equation}\int_0^T\left|L(x,t)\right|^2dt\ = \  4\sum_{\substack{0<\gamma\leq T \\
    0<\gamma'\leq T}}x^{i(\gamma-\gamma')}
\int_{-\infty}^\infty\frac{dt}{(1+(t-\gamma)^2)(1+(t-\gamma')^2)}+O(\log^3 T);\end{equation}
it then follows from the residue calculus that the definite integral evaluates to
$w(\gamma-\gamma')\pi/2$ and
\begin{equation}\int_0^T\left|L(x,t)\right|^2dt\ = \ 2\pi\sum_{\substack{0<\gamma\leq T \\
    0<\gamma'\leq T}}x^{i(\gamma-\gamma')}w(\gamma-\gamma')+O(\log^3 T).\end{equation}
Putting $x=T^\alpha$ yields
\begin{equation}\label{montlhs}\int_0^T\left|L(x,t)\right|^2dt\ = \ F(\alpha)T\log T+O(\log^3 T).\end{equation}
The non-negativity of the left side of~\eqref{montlhs} gives the statement in Theorem~\ref{thm:mont}
of the positivity of $F(\alpha)$. (The evenness of $F(\alpha)$ follows from the fact that
$\gamma$ and $\gamma'$ may be interchanged in the definition~\eqref{MontF}.)
It then falls to evaluate $\int_0^T\left|R(x,t)\right|^2dt$. First,
\begin{equation}\int_0^T\left|x^{-1+it}\log\tau\right|^2dt\ = \ \frac T{x^2}(\log^2 T+O(\log T))\end{equation}
for all $x\geq1,T\geq2$. Montgomery then applied a quantitative version of Parseval's
identity for Dirichlet series to find \begin{equation}\label{eq:parseval}
\int_0^T\left|\sum_n a_nn^{-it}\right|^2dt\ = \ \sum_n\left|a_n\right|^2(T+O(n)).\end{equation}
Applying \eqref{eq:parseval}  to the explicit formula~\eqref{eq:explicit}, we find
\begin{equation}\label{eq:MontRHSPNT}\begin{aligned}[b]
  &\frac1x\int_0^T\left|\sum_{n\leq x}\Lambda(n)\left(\frac xn\right)^{-1/2+it}
    +\sum_{n>x}\Lambda(n)\left(\frac xn\right)^{3/2+it}\right|^2dt \\
  &\hspace{0.25in}=\ \frac1x\sum_{n\leq x}\Lambda(n)^2\left(\frac xn\right)^{-1}(T+O(n))
  +\frac1x\sum_{n>x}\Lambda(n)^2\left(\frac xn\right)^3(T+O(n)) \\
  &\hspace{0.25in}=\ T(\log x+O(1))+O(x\log x),
\end{aligned}\end{equation}
where the last line follows from the prime number theorem with error term.
It then follows from simple estimation of the error terms and a more delicate application
of Cauchy-Schwarz that
\begin{equation}\label{eq:montrhs}\int_0^T\left|R(T^\alpha,t)\right|^2dt
\   =\ ((1+o(1))T^{-2\alpha}\log T+\alpha+o(1))T\log T,\end{equation} uniformly for
$0\leq\alpha\leq1-\epsilon$. Combining~\eqref{montlhs} and~\eqref{eq:montrhs} yields
Montgomery's theorem.\qed
\end{proof}

We end this section by describing the heuristic evidence that led Montgomery to
conjecture~\eqref{eq:spcc} on the behavior of $F(\alpha)$ for $\alpha>1$.
The argument above for proving Montgomery's conjecture for $0\leq\alpha<1$ fails for
$\alpha>1$, since error terms such as in~\eqref{eq:MontRHSPNT} and those arising from
Cauchy-Schwarz and the last line of~\eqref{eq:explicit} are no longer dominated by the
main term.

Examining the sum over primes from the explicit formula~\eqref{eq:explicit} with
$\sigma=3/2$,
\begin{equation}\sum_{n\leq x}\Lambda(n)\left(\frac xn\right)^{-1/2+it}+\sum_{n>x}\Lambda(n)\left(\frac
xn\right)^{3/2+it},\end{equation} the expected value is seen by the prime number theorem to be
\begin{equation}\frac{2x^{1-it}}{\left(\frac12+it\right)\left(\frac32-it\right)}.\end{equation} From the
proof of Montgomery's theorem we have, with $F(x,T)$ as in~\eqref{eq:MontFxT}, that
\begin{equation}\begin{aligned}[b]
  F(x,T)\ = \ \frac1{2\pi x}\int_0^T\Bigg|&\sum_{n\leq x}\Lambda(n)
  \left(\frac xn\right)^{-1/2+it}+\sum_{n>x}\Lambda(n)\left(\frac xn\right)^{3/2+it} \\
  &\hspace{0.25in}-\frac{2x^{1-it}}{\left(\frac12+it\right)\left(\frac32-it\right)}\Bigg|^2dt+o(T\log T);\end{aligned}\end{equation}
it follows that we would like to know the size of
\begin{equation}\int_0^T\left|\frac1x\sum_{n\leq x}\Lambda(n)n^{1/2-it}+x\sum_{n>x}\Lambda(n)n^{-3/2-it}
-\frac{2x^{1/2-it}}{\left(\frac12+it\right)\left(\frac32-it\right)}\right|^2dt.\end{equation}
Montgomery proceeded to multiply out and integrate term-by-term, finding that the
non-diagonal is non-neglectable. He collected terms in the form of sums of the sort
\begin{equation}\label{eq:lamlamsum}\sum_{n\leq y}\Lambda(n)\Lambda(n+h);\end{equation}
invoking the Hardy-Littlewood $k$-tuple conjecture for 2-tuples with a strong error
term, \eqref{eq:lamlamsum} should be $\asymp y$. This would give
\begin{equation} F(x,T)\ \sim\ \frac T{2\pi}\log T\end{equation} in $x\leq T\leq x^{2-\epsilon}$, and there is little
reason to expect the behavior to change for bounded $\alpha\geq2$. On this basis,
Montgomery made his conjecture~\eqref{eq:spcc}.

\subsection{Spacings Between Adjacent Zeros}\label{sec:spacingsbetweenzeros}

Motivated by Montgomery's pair correlation conjecture on the zeros of the Riemann zeta function, starting in the late 1970s Andrew Odlyzko began a large-scale computation of zeros of $\zeta(s)$ high in the critical strip. The average spacing between zeros of $\zeta(s)$ at height $T$ in the critical strip is on the order of $1/\log T$; thus as we go higher and higher we have more and more zeros in regions of fixed size, and there is every reason to hope that, after an appropriate normalization, a limiting behavior exists.

The story of computing zeta zeros goes back to Riemann himself. As mentioned in \S\ref{sec:Lfunc_and_zeros}, in his one paper on the zeta function \cite{Ri}, Riemann states the Riemann hypothesis (RH) in passing. He used a formula now known as the Riemann-Siegel formula to compute a few zeros of $\zeta(s)$ up to a height of probably no greater than 100 in the critical strip; though he did not mention these computations in the paper, the role of these computations was important in the development of mathematics and mirror the role played by the calculation of energy levels in nuclear physics in illuminating the internal structure of the nucleus. The formula was actually lost for almost 70 years, and did not enter the mathematics literature until Siegel was reading Riemann's works \cite{Sie1}. Siegel's role in understanding, collecting, and interpreting Riemann's notes should not be underestimated, since the expertise and insight needed to infer the ideas behind the notes was great.

The development of the Riemann-Siegel formula proceeds along the purely classical lines
of complex analysis. Riemann had a formula for $\zeta(s)$ valid for all $s\in\mathbb{C}$; namely,
\begin{equation}\zeta(s)
\ = \ \frac{\Gamma(1-s)}{2\pi i}\int_{\mathcal C}\frac{(-x)^s}{e^x-1}\cdot\frac{dx}x,\end{equation}
where $\mathcal C$ is the contour that starts at $+\infty$, traverses the real axis towards
the origin, circles the origin once with the positive orientation about $0$, and then
retraces its path along the real axis to $+\infty$.

By splitting off some finite sums from the contour integral above, Riemann arrived at
the formula
\begin{eqnarray}\zeta(s) & \ =\ &   \sum_{n=1}^N\frac1{n^s} \ + \ \pi^{1/2-s}
    \frac{\Gamma\left(\tfrac s2\right)}{\Gamma\left(\tfrac12(1-s)\right)}
    \sum_{n=1}^M\frac1{n^{1-s}}\nonumber\\
    & & \ \ \ - \ \frac{\Gamma(1-s)}{2\pi i}
    \int_{\mathcal C_M}\frac{(-x)^s e^{-Nx}}{e^x-1}\cdot\frac{dx}x,
\end{eqnarray} where here $s\in\mathbb{C}$, $N,M\in\mathbb{N}$ are arbitrary, and $\mathcal C_M$ is the contour that traces from $+\infty$ to $(2M+1)\pi$, circles along $|s|=(2M+1)\pi$ once with positive orientation, and then returns to $+\infty$, thereby enclosing the poles $\pm2\pi iM,\pm2\pi i(M-1),\ldots,\pm2\pi i$, and the singularity at 0. This formula for $\zeta(s)$ can be regarded as an approximate functional equation, where the remainder is expressed explicitly in terms of the contour integral over $\mathcal C_M$. The main task in developing the Riemann-Siegel formula then falls to estimating the contour integral over $\mathcal C_M$ using the saddle-point method.

Prior to Siegel's work, in 1903 Gram showed that the first 10 zeros of $\zeta(s)$ lie on the critical line, and showed that these 10 were the only zeros up to height 50. The development of the above, along with a cogent narrative of Riemann, Siegel, and Gram's contributions, may be found in Edwards \cite{Ed}.

In almost every decade in the last century, mathematicians have set new records for computations of critical zeros of $\zeta(s)$. Alan Turing brought the computer to bear on the problem of computing zeta zeros for the first time in 1950, when, as recounted by Hejhal and Odlyzko~\cite{HejOd}, Turing used the Manchester Mark 1 Electronic Computer, which had 25,600 bits of memory and punched its output on teleprint tape in base 32, to verify every zero up to height 1540 in the critical strip (he found there are 1104 such zeros). Turing also introduced a simplified algorithm to compute zeta zeros now known as Turing's method. Turing published on his computer computations and his new algorithm for the first time in 1953~\cite{Turing}.

Following Turing, the computation of zeros of $\zeta(s)$ took off thanks to the increasing
power of the computer. At this time, the first $10^{13}$ nontrivial zeros of $\zeta(s)$,
tens of billions of nontrivial zeros around the $10^{23}$ and $10^{24}$, and hundreds of
nontrivial zeros near zero number $10^{32}$ are known to lie on the critical line.
Additionally, new algorithms by Sch\"onhage and Odlyzko, and by Sch\"onhage, Heath-Brown,
and Hiary have sped up the verification of zeta zeros.

However, the aforementioned projects for numerically checking that zeros of $\zeta(s)$
lay on the critical line were not concerned with accurately recording the height along the
critical line of the zeros computed; only with ensuring the zeros had real part exactly $1/2$.
This changed in the late 1970s with a series of
computations by Andrew Odlyzko, who was motivated not only by the Riemann Hypothesis but
also by Montgomery's pair correlation conjecture.

Rather than verify consecutive zeros starting from the critical point, Odlyzko was
interested in starting his search high up in the critical strip, in the hope that
near zero number $10^{12}$, the behavior of $\zeta(s)$ would be closer to its asymptotic
behavior. For, as Montgomery's pair correlation conjecture is a statement about the limit
as one's height in the critical strip passes to infinity, one would wish to know the
ordinates of many consecutive zeta zeros in the regime where $\zeta(s)$ is behaving
asymptotically if one wished to test the plausibility of the conjecture.

As he explains~\cite{Od2}, his first computations~\cite{Od1} were in a window around
zero number $10^{12}$, and were done on a Cray supercomputer using the Riemann-Siegel
formula. These computations motivated Odlyzko and Arnold Sch\"onhage to develop a faster
algorithm for computing zeros~\cite{OdAlg,OS}, which was implemented in the late 1980s
and was subsequently used to compute several hundred million zeros near zero number
$10^{20}$ and some near number $2\cdot 10^{20}$, as seen in~\cite{Od4,Od5}.

\subsection{Number Theory and Random Matrix Theory Successes}\label{sec:earlyyearsNTandRMT}

After its introduction as a conjecture in the late 1950s to describe the energy levels of heavy nuclei, random matrix theory experienced successes on both the numerical and the experimental fronts. The theory was beautifully developed to handle a large number of statistics, and many of these predictions were supported as more and more data on heavy nuclei became available. While there was significant theoretical progress (see, among others, \cite{Dy1, Dy2, Gau, Meh1, MG, Wig1, Wig2, Wig3, Wig4, Wig5, Wig6}), there were some gaps that were not resolved until recently. For example, while  the density of normalized eigenvalues in matrix ensembles (Wigner's semi-circle law) was known for all ensembles where the entries were chosen independently from nice distributions, the spacings between adjacent normalized eigenvalues resisted proof until this century (see, among others, \cite{ERSY, ESY, TV1, TV2}).

The fact that random matrix theory also had a role to play in number theory only emerged roughly twenty years after Wigner's pioneering investigations. The cause of the connection was a chance encounter between Hugh Montgomery and Freeman Dyson at the Institute for Advanced Study at Princeton. As there are now many excellent summaries and readable surveys of their meeting, early years and statistics (see in particular \cite{Ha} for a Hollywoodized version), and the story is now well known, we content ourselves with a quick summary. For more, see among others \cite{Con2, Con3, Di1, Di2, IK, KaSa1, KaSa2, KeSn3, MT-B}.

As described in \S\ref{sec:classnumberproblem}, Montgomery was interested in the class number, which led him to study the pair correlation of zeros of the Riemann zeta function. Given an increasing sequence $\{\alpha_n\}_{n=1}^\infty$ and a box $B \subset \mathbb{R}^{n-1}$, the $n$-level correlation is defined by \begin{eqnarray}\label{eqnlevelcorragain}
\lim_{N \rightarrow \infty} \frac{\#\left\{\left(\alpha_{j_1}-\alpha_{j_2}, \dots, \alpha_{j_{n-1}} - \alpha_{j_n}\right) \in B, j_i \neq j_k  \right\}}{N};
\end{eqnarray} the pair correlation is the case $n=2$, and through combinatorics knowing all the correlations yields the spacing between adjacent events. Montgomery was partially able to determine the behavior for the pair correlation. When he told Dyson his result, Dyson recognized it as the pair correlation function of eigenvalues of random Hermitian matrices in a Gaussian Unitary Ensemble, GUE.

This observation was the beginning of a long and fruitful relationship between the two areas. At first it appeared that the GUE was the only family of matrices needed for number theory, as there was remarkable universality seen in statistics. This ranged from work by Dennis Hejhal \cite{Hej} on the 3-level correlation of the zeros of $\zeta(s)$ and Zeev Rudnick and Peter Sarnak \cite{RS} on the $n$-level correlation of general automorphic $L$-functions, to Odlyzko's \cite{Od1,Od2} striking experiments on spacings between adjacent normalized zeros. In all cases the behavior agreed with that of the GUE.

In particular, Odlyzko's computations of high zeta zeros showed that,
high enough along the critical line, the empirical distribution of nearest-neighbor
spacings for zeros of $\zeta(s)$ becomes more or less indistinguishable from that of
eigenvalues of random matrices from the Gaussian Unitary Ensemble, or GUE.
The agreement with the first million zeros is poor, but the agreement near zero number
$10^{12}$ is close, near perfect near zero number $10^{16}$, and even better near
zero number $10^{20}$. These results provide massive evidence for Montgomery's conjecture,
and vindicate Odlyzko's choice of starting his search high along the critical line; see
Figure~\ref{fig:odlyzko_hump}.

\begin{figure}[h]
\begin{center}
\scalebox{.35}{\includegraphics{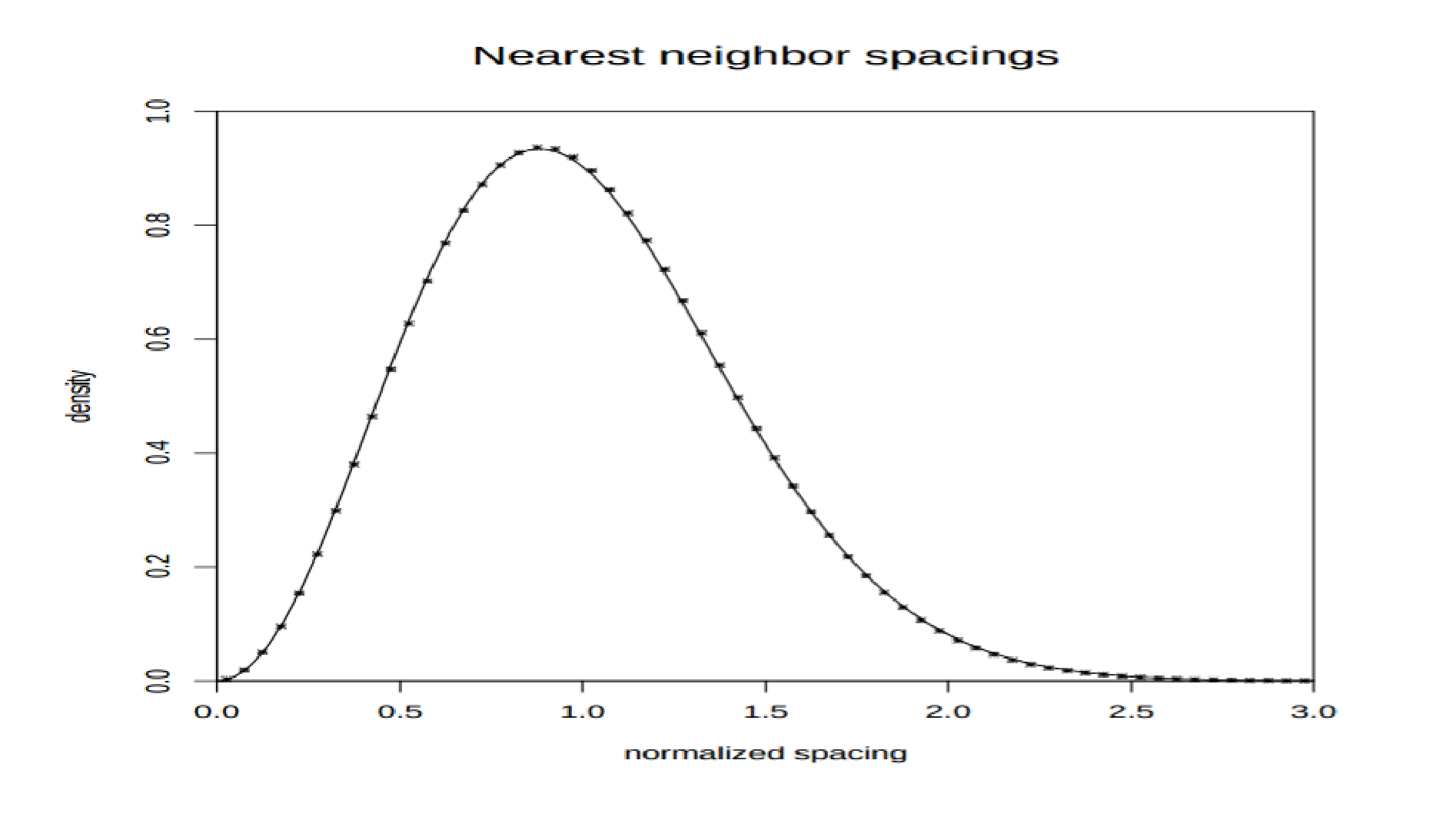}}
\caption{\label{fig:odlyzko_hump}
  Probability density of the normalized spacings $\delta_n$.
  Solid line: GUE prediction. Scatterplot: empirical data based on Odlyzko's computation of
  a billion zeros near zero $\#1.3\times10^{16}$. (From Odlyzko~\cite[Figure 1, p. 4]{Od2}.)}
\end{center}
\end{figure}

In all of these investigations, however, the statistics studied are insensitive to the behavior of finitely many zeros. This is a problem, as certain zeros of $L$-functions play an important role. The most important of these are those of elliptic curve $L$-functions. Numerical computations on the number of points on elliptic curves modulo $p$ led to the Birch and Swinnerton-Dyer conjecture. Briefly, this states that the order of vanishing of the $L$-function at the central point equals the geometric rank of the Mordell-Weil group of rational solutions. The theorems on $n$-level correlations and spacings between adjacent zeros are all limiting statements; we may remove finitely many zeros without changing these limits. Thus these quantities cannot detect what is happening at the central point.

Unfortunately for those who were hoping to distinguish between different symmetry groups,
Nick Katz and Peter Sarnak~\cite{KaSa1, KaSa2} showed in the nineties that the $n$-level correlations of the scaling limits of the classical compact groups are all the same and equal that of the GUE. Thus when we were saying number theory agreed with GUE we could instead have said it agreed with unitary, symplectic or orthogonal matrices.

This led them to develop a new statistic that would be sensitive to finitely many zeros in general, and the important ones near the central point in particular. The resulting quantity is the $n$-level density. We assume the Generalized Riemann Hypothesis (GRH) for ease of exposition, so given an $L(s,f)$ all the zeros are of the form $1/2 + i\gamma_{j;f}$ with $\gamma_{j;f}$ real. The statistics are still well-defined if GRH fails, but we lose the interpretation of ordered zeros and connections with nuclear physics. For more detail on these statistics see the seminal work by Henryk Iwaniec, Wenzhi Luo and Peter Sarnak \cite{ILS}, who introduced them (or \cite{AAILMZ} for an expanded discussion).

Let $\phi_j$ be even Schwartz functions such that the Fourier transforms \begin{equation} \widehat{\phi_j}(y) \ := \ \int_{-\infty}^\infty \phi_j(x) e^{-2\pi i xy} dx\end{equation} are compactly supported, and set $\phi(x) = \prod_{j=1}^n \phi_j(x_j)$. The $n$-level density for $f$ with test function $\phi$ is
\begin{eqnarray}
D_n(f,\phi) & \ = \ &  \sum_{j_1, \dots, j_n \atop j_\ell \neq j_m} \phi_1\left(L_f \gamma_{j_1;f}\right) \cdots \phi_n\left(L_f
\gamma_{j_n;f}\right),
\end{eqnarray} where $L_f$ is a scaling parameter which is frequently related to the conductor. Given a family $\mathcal{F} = \cup_N \mathcal{F}_N$ of $L$-functions with conductors tending to infinity, the $n$-level density $D_n(\mathcal{F},\phi,w)$ with test function $\phi$ and non-negative weight function $w$ is defined by \begin{equation} D_n(\mathcal{F},\phi,w) \ := \ \lim_{N \to \infty} \frac{\sum_{f\in \mathcal{F}_N} w(f) D_n(f,\phi)  }{ \sum_{f\in \mathcal{F}_N} w(f)}. \end{equation}

Katz and Sarnak \cite{KaSa1, KaSa2} conjecture that as the conductors tend to infinity, the $n$-level density of zeros near the central point in families of $L$-functions agree with the scaling limits of eigenvalues near 1 of classical compact groups. Determining \emph{which} classical compact group governs the symmetry is one of the hardest problems in the subject, though in many cases through analogies with a function field analogue one has a natural candidate for the answer, arising from the monodromy group. Unlike the $n$-level correlations, the different classical compact groups all have different scaling limits. As the test functions are Schwartz and of rapid decay, this statistics \emph{is} sensitive to the zeros at the central point. While it was possible to look at just one $L$-function when studying correlations, that is not the case for the $n$-level density. The reason is that while one $L$-function has infinitely many zeros, it only has a finite number within a small, bounded window of the central point (the size of the window is a function of the analytic conductor). We always need do perform some averaging; for the $n$-level correlations each $L$-function gives us enough zeros high up on the critical line for such averaging, while for the $n$-level density we must move horizontally and look at a \emph{family} of $L$-functions. While the exact definition of family is still a work in progress, roughly it is a collection of $L$-functions coming from a common process. Examples include Dirichlet characters, elliptic curves, cuspidal newforms, symmetric powers of ${\rm GL}(2)$ $L$-functions, Maass forms on ${\rm GL}(3)$, and certain families of ${\rm GL}(4)$ and ${\rm GL}(6)$ $L$-functions; see for example \cite{AAILMZ, AM, DM1, DM2, ER-GR, FiM, FI, Gao, Gu, HM, HR, ILS, KaSa2, LM, Mil1, MilPe, OS1, OS2, RR, Ro, Rub, Ya, Yo2}. This correspondence between zeros and eigenvalues allows us, at least conjecturally, to assign a definite symmetry type to each family of $L$-functions (see \cite{DM2, ShTe} for more on identifying the symmetry type of a family).

There are many other quantities that can be studied in families. Instead of looking at zeros, one could look at values of $L$-functions at the central point, or moments along the critical line. There is an extensive literature here of conjectures and results, again with phenomenal agreement between the two areas. See for example \cite{CFKRS}.

\section{Future Trends and Questions in Number Theory}\label{sec:futuretrendsquestions}
\numberwithin{equation}{section}

The results above are just a small window of the great work that has been done with number theory and random matrix theory. Our goal above is not to write a treatise, but to quickly review the history and some of the main results, setting the stage for some of the problems we think will drive progress in the coming decades. As even that covers too large an area, we have chosen to focus on a few problems with a strong numeric component, where computational number theory is providing the same support and drive to the subject as experimental physics did years before. There are of course many other competing models for $L$-functions. One is the Ratios Conjectures of Conrey, Farmer and Zirnbauer \cite{CFZ1, CFZ2, CS}. Another excellent candidate is Gonek, Hughes and Keating's hybrid model \cite{GHK}, which combines random matrix theory with arithmetic by modeling the $L$-function as a partial Hadamard product over the zeros, which is modeled by random matrix theory, and a partial Euler product, which contains the arithmetic.

In all of the quantities studied, we have agreement (either theoretical or experimental) of the main terms with the main terms of random matrix theory in an appropriate limit. A natural question to ask is how this agreement is reached; in other words, what is the rate of convergence, and what affects this rate? In the interest of space we assume in parts of this section that the reader is familiar with the results and background material from \cite{ILS,RS}, though we describe the results in general enough form to be accessible to a wide audience.

\subsection{Nearest Neighbor Spacings}

We first look at spacing between adjacent zeros, where Odlyzko's work has shown phenomenal agreement for zeros of $\zeta(s)$ and eigenvalues of the GUE ensemble. We plot the \emph{difference} between the empirical and `theoretical,' or `expected' GUE spacings in Figure \ref{fig:odlyzko_difference}. In his paper \cite{Od2}, Odlyzko writes: \emph{Clearly there is structure in this difference graph, and the challenge is to understand where it comes from.}

\begin{figure}[h]
\begin{center}
\scalebox{.35}{\includegraphics{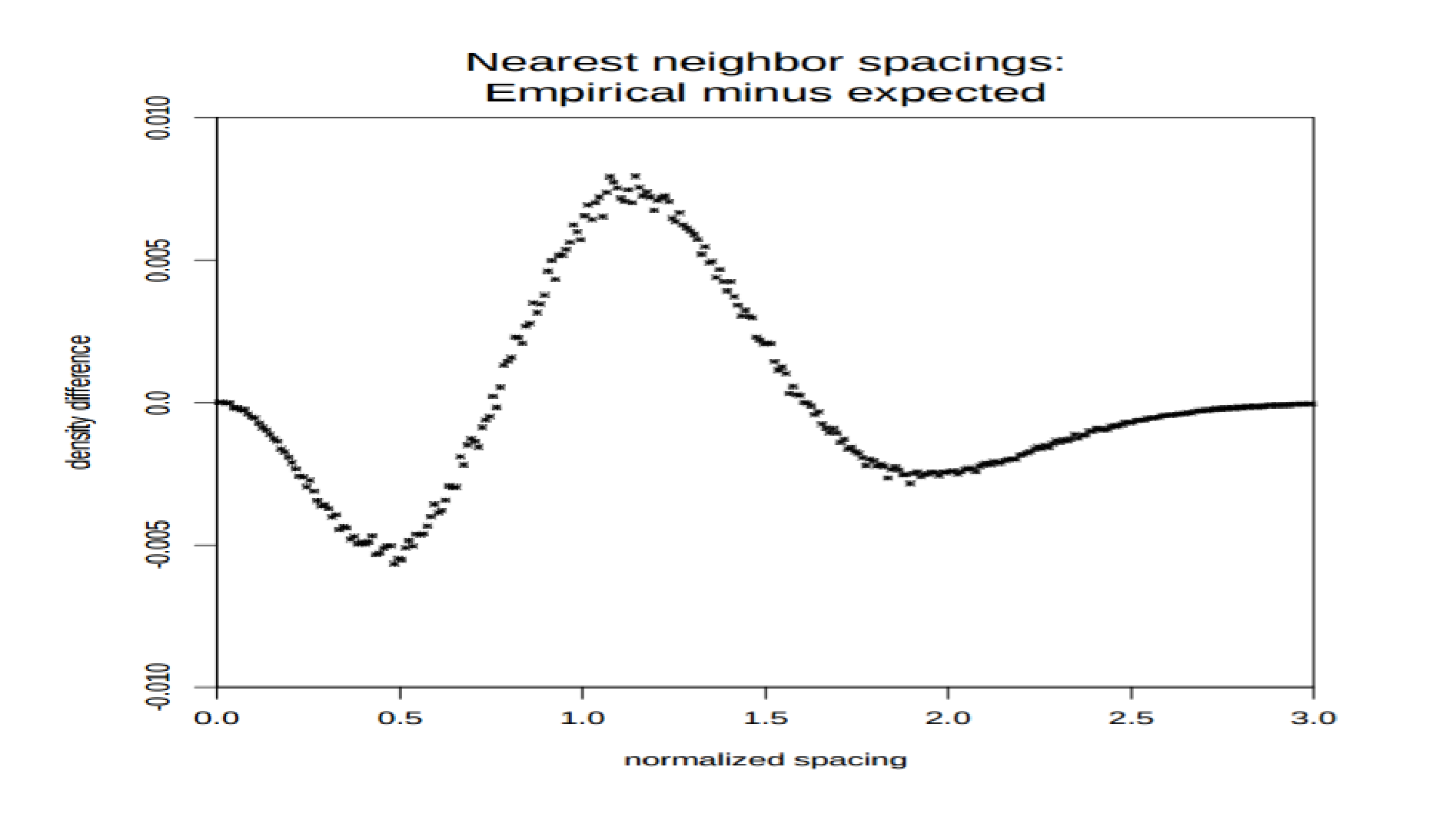}}
\caption{\label{fig:odlyzko_difference}
  Probability density of the normalized spacings $\delta_n$.
  Difference between empirical distribution for a billion zeros near zero
  $\#1.3\times10^{16}$, as computed by Odlyzko, and the GUE prediction.
  (From Odlyzko~\cite[Figure 2, p. 5]{Od2}.)}
\end{center}
\end{figure}

Recently, compelling work of Bogomolny, Bohigas, Leboeuf and Monastra \cite{bogo}
provides a conjectural answer for the source of the additional structure in the form of
lower-order terms in the pair correlation function for $\zeta(s)$. Though the main term
is all that appears in the limit (where Montgomery's conjecture applies), the lower-order
terms contribute to any computation outside the limit, and would therefore influence
any numerical computations like those of Odlyzko. By comparing a conjectural formula for
the two-point correlation function of critical zeros of $\zeta(s)$ of roughly height $T$
due to Bogomolny and Keating in~\cite{BogoKeat} with the known formula for the two-point
correlation function for eigenvalues of unitary matrices of size $N$, Bogomolny et. al. 
deduce a recipe for picking a matrix size that will best model the lower-order terms
in the two-point correlation function, and conjecture that it will be the best choice
for all correlation functions, and therefore the nearest-neighbor spacing. More recently yet, Due\~nez, Huynh, Keating, Miller, and Snaith \cite{DHKMS1, DHKMS2} have applied techniques of Bogomolny et. al. and others to studying lower-order terms in
the behavior of the lowest zeros of $L$-functions attached to elliptic curves. Their results are currently being extended to other $L$-functions by the first and third named authors here and their colleagues.

\subsection{$n$-Level Correlations and Densities}

The results of the studies on spacings between zeros suggest that, while the arithmetic of the $L$-function is not seen in the main term, it does arise in the lower order terms, which determine the \emph{rate}  of convergence to the random matrix theory predictions. Another great situation where this can be seen is through the $n$-level correlations and the work of Rudnick and Sarnak \cite{RS}. They proved that the $n$-level correlations of \emph{all} cuspidal automorphic $L$-functions $L(s,\pi)$ have the same limit (at least in suitably restricted regions). Briefly, the source of the universality in the main term comes from the Satake parameters in the Euler product of the $L$-function, whose moments are the coefficients in the series expansion. In their Remark 3 they write (all references in the quote are to their paper): \begin{quote} The universality (in $\pi$) of the distribution of zeros of $L(s, \pi)$ is somewhat surprising, the reason being that the distribution of the coefficients $a_\pi(p)$ in (1.6), as $p$ runs over primes, is not universal. For example, for degree-two primitive $L$-functions, there are two conjectured possible limiting distributions for the $a_\pi(p)$'s: Sato-Tate or uniform distribution (with a Dirac mass term). As the degree increases, the number of possible limit distributions increases rapidly. However, it is a consequence of the theory of the Rankin-Selberg $L$-functions (developed by Jacquet, Piatetski-Shapiro, and Shalika for $m > 3$) that all these limiting distributions have the same second moment (at least under hypothesis (1.7)). It is the universality of the second moment that is eventually responsible for the universality in Theorems 1.1 and 1.2. For the case of pair correlation ($n = 2$), this is reasonably evident; for $n > 2$ it was (at least for us) unexpected, and it has its roots in a key feature of ``diagonal pairings'' that emerges as the main term in the asymptotics of $R_n(T, f, h)$. \end{quote}
Similar results are seen in the $n$-level densities. There we average the Satake parameters over a family of $L$-function, and in the limit as the conductors tend to infinity only the first and second moments contribute to the main term (at least under the assumption of the Ramanujan conjectures for the sizes of these parameters). The first moment controls the rank at the central point, and the second moment determines the symmetry type (see \cite{DM2,ShTe}). For example, families of elliptic curves with very different arithmetic (complex multiplication or not, or different torsion structures) have the same limiting behavior \emph{but} have different rates of convergence to that limiting behavior. This can be seen in terms of size one over the logarithm of the conductor; while these terms vanish as the conductors tend to infinity, they are present for finite values. See \cite{Mil2, Mil5} for several examples (as well as \cite{MMRW}, where interesting biases are observed in lower order terms of the second moments in the families).

\subsection{Conclusion}

The number theory results above may be interpreted in a framework similar to that of the Central Limit Theorem. There, if we have `nice' independent identically distributed random variables, their normalized sum (standardized to have mean zero and variance 1) converges to the standard normal distribution. The remarkable fact is the universality, and that the limiting distribution is independent of the shape of the distribution. We quickly review why this is the case and then interpret our number theory results in a similar vein.

Given a distribution with finite mean and variance, we can always perform a linear change of variables to study a related quantity where now the mean is zero and the variance one. Thus, the first moment where the \emph{shape} of the distribution is noticeable is the third moment (or the fourth if the distribution is symmetric about the mean). In the proof of the Central Limit Theorem through moment generating functions or characteristic functions, the third and higher moments do not survive in the limit. Thus their effect is only on the rate of convergence to the limiting behavior (see the Berry-Esseen theorem), and not on the convergence itself.

The situation is similar in number theory. The higher moments of the Satake parameters (which control the coefficients of the $L$-functions) again surface only in terms which vanish in the limit, and their effect therefore is seen only in the rate of convergence.

This suggests several natural questions. We conclude with two below, which we feel will play a key role in studies in the years to come. These two questions provide a nice mix, with the first related to the main term and the second related to the rate of convergence.\\ \

\begin{itemize}

\item Is Montgomery's pair correlation true for all boxes (or test functions)? What about the $n$-level correlations, both for $\zeta(s)$ and cuspidal automorphic $L$-functions? Note agreement with random matrix theory for all these statistics implies the conjectures on spacings between adjacent zeros. \\ \

\item For a given $L$-function (if we are studying $n$-level correlations) or a family of $L$-functions (if we are studying $n$-level densities), how does the arithmetic enter? Specifically, what are the possible  lower order terms? How are these affected by properties of the $L$-functions? If we use Rankin-Selberg convolution to create new $L$-functions, how is the arithmetic of the lower order terms here a function of the arithmetic of the constituent pieces?\\ \

\end{itemize}

There are numerous resources and references for those wishing to pursue these questions further. For the $n$-level correlations, the starting point are the papers \cite{Mon, Hej, RS}, while for the $n$-level densities it is \cite{KaSa1, KaSa2, ILS}.

%
%

\ \\

\end{document}